\DeclareMathAlphabet{\mathcal}{OMS}{cmsy}{m}{n}
\DeclareMathAlphabet{\mathbbold}{U}{bbold}{m}{n}  
\theoremstyle{plain}
\newtheorem{thm}{Theorem}
\newtheorem{lm}[thm]{Lemma}
\newtheorem{lemma}[thm]{Lemma}
\newtheorem{cor}[thm]{Corollary}
\newtheorem{prop}[thm]{Proposition}
\newtheorem{conj}{Conjecture}
\newcounter{question}
\newcommand{\qt}{%
        \stepcounter{question}%
        \thequestion}
\newcommand{\bq}{\fbox{Q\qt}\ }
\newcounter{typo}
\theoremstyle{remark}
\newtheorem{rmk}{Remark}
\theoremstyle{definition}
\newcommand{\bnu}{\begin{enumerate}}
\newcommand{\enu}{\end{enumerate}}
\newcommand{\bpf}{\begin{proof}}
\newcommand{\epf}{\end{proof}}
\newcommand{\qq}{\qquad}
\newcommand{\al}{\alpha}
\newcommand{\ga}{\gamma}
\newcommand{\om}{\omega}
\newcommand{\la}{\lambda}
\newcommand{\ep}{\epsilon}
\newcommand{\si}{\sigma}
\newcommand{\vp}{\varphi}
\newcommand{\bbr}{\mathbb{R}}
\newcommand{\rn}{\mathbb{R}^n}
\newcommand{\rrn}{\mathbb{R}^n}
\newcommand{\f}{\frac}
\newcommand{\nf}{\infty}
\newcommand{\tf}{\tfrac}
\newcommand{\wh}{\widehat}
\newcommand{\supp}{\mathrm{supp}}
\newcommand{\norm}[1]{\left\Vert{#1}\right\Vert}
\newcommand{\vu}{{\vec\mu}}
\begin{document}

\title[The H\"ormander multiplier theorem]{The H\"ormander multiplier theorem, II: The bilinear local $L^2$ case}
\author{Loukas Grafakos}

\address{Department of Mathematics, University of Missouri, Columbia MO 65211, USA}
\email{grafakosl@missouri.edu}

\author{Danqing He}

\address{Department of Mathematics, 
Sun Yat-sen (Zhongshan) University, 
Guangzhou, 510275, 
P.R. China}
\email{hedanqing35@gmail.com}

\author[Honzik]{Petr Honzik}
\address{Faculty of Mathematics and Physics, Charles University in Prague,
Ke Karlovu 3,
121 16 Praha 2, Czech Republic}
\email{honzik@gmail.com}

\thanks{The first author was supported by the Simons Foundation. The third author  
 was supported by the ERC CZ grant LL1203 of the Czech Ministry of Education}
%\thanks{2010 Mathematics Classification Number 42B20, 42B99}

\thanks{{\it Mathematics Subject Classification:} Primary   42B15. Secondary 42B25}
\thanks{{\it Keywords and phases:} Bilinear multipliers, H\"ormander multipliers,
wavelets, multilinear operators}
\date{}
\begin{abstract}
We use wavelets of   tensor product type  to obtain the boundedness of bilinear multiplier operators
on $\mathbb R^n\times \mathbb R^n$ associated with  
H\"ormander multipliers  on $\mathbb R^{2n}$ with 
minimal smoothness. We focus on the local $L^2$ case and we obtain boundedness under the minimal smoothness assumption of $n/2$ derivatives. We also provide counterexamples to obtain necessary
  conditions   for all sets of indices.
\end{abstract}

\maketitle
%\tableofcontents

\begin{comment}
\begin{rmk}
Notations.

$\si$: multipliers

$m$: multiplicity

$L^r_s$: Sobolev space

$\Psi^{\lambda,G}_\mu$: basis with dilation $\la$ and translation $\mu$

$\xi$ and $\eta$: two variables.

$f$ and $g$: two functions
\end{rmk}
\end{comment}

\section{Introduction}

\begin{comment}The purpose of this paper is to present several results on bilinear  
H\"ormander type multipliers.

A function $\si$ is called an ($L^p$) multiplier if the operator $T_\si(f)=(\si\wh f)^{\vee}$
is bounded on $L^p(\rn)$ for some $p$, where $\wh f$
is the Fourier transform of $f$ defined by $\int_{\rn} f(x)e^{-2\pi ix\cdot \xi}dx$, and $f^{\vee}$
is the inverse Fourier transform of $f$ which we sometimes use also $\mathcal F^{-1}(f)$
to denote.
The study of multiplier theory was initiated by Marcinkiewicz \cite{Mar}. Later Mikhlin 
\cite{Mikhlin} proved his famous multiplier condition, which was improved by H\"ormander 
\cite{Hoe}. H\"ormander's result could be stated as follows.
Let $L^r_s(\rn)$ be the Sobolev space consisting of all distributions $f$
such that
$(I-\Delta)^{s/2}(f)\in L^r(\rn)$, where $\Delta$ is the Laplacian.
Let $\wh\psi$ be a nonnegative smooth bump supported in the unit annulus
such that $\sum_{j=-\nf}^{\nf}\wh\psi(2^{-j}\xi)=1$ for $\xi\neq0$.
If    $1< r\le 2$ and  $s>n/r$,  a bounded function $\si$ satisfies 
\begin{equation}\label{2}
\sup_{k\in \mathbb Z} \big\|\wh{\psi}\si (2^k \cdot)\big\|_{L^r_s}<\infty, 
\end{equation} 
then $T_\si$ admits a bounded extension from 
$L^p(\rn)$ to itself for all $1<p<\nf$.
We call all multipliers described by Sobolev spaces $L^r_s$ as above H\"ormander type 
multipliers, where $r$ is not necessary to be less than $2$.
\end{comment}

An $m$-linear $(p_1,\dots, p_m,p)$ multiplier $\si(\xi_1,\dots,\xi_m)$ is a   function on $\mathbb R^{ n}\times \cdots \times \mathbb R^{ n}$
such that the corresponding $m$-linear operator  
$$
T_\si(f_1,\dots , f_m)(x)= \int_{\mathbb R^{mn}}\si(\xi_1,\dots,\xi_m)\wh f_1(\xi_1)\cdots 
\wh f_m(\xi_m) e^{2\pi i x\cdot(\xi_1+\cdots+\xi_m)}d\xi_1\cdots d\xi_m, 
$$
 initially defined on $m$-tuples of Schwartz functions, 
has a  bounded extension 
from $L^{p_1}(\mathbb R^n)\times\cdots\times L^{p_m}(\mathbb R^n)$
to $L^{p}(\mathbb R^n)$ for appropriate $p_1,\dots, p_m,p$. 

It is known from the work in \cite{CM} for $p>1$ and  ~\cite{KS},~\cite{GT} for $p\le 1$, that the classical Mihlin
condition on $\si$ in $\mathbb R^{mn}$ yields boundedness for $T_\si$ from $L^{p_1}(\mathbb R^n)\times\cdots\times L^{p_m}(\mathbb R^n)$
to $L^{p}(\mathbb R^n)$ for all $1<p_1,\dots p_m\le \infty$, $1/m<p =  (1/p_1+\cdots +1/p_m)^{-1}<\infty$. 
The Mihlin condition in this setting is usually referred to as the Coifman-Meyer 
condition and  the associated multipliers bear the same names as well. The Coifman-Meyer condition cannot be 
weakened to the  
Marcinkiewicz condition, as the latter fails in the multilinear setting; see \cite{GK}. 
Related multilinear multiplier theorems  
with mixed smoothness (but not necessarily minimal) can  be found in \cite{MT}, \cite{MPTT1},
 \cite{GHNY}. 
%In all these   multilinear results, only the case $r\in(1,2]$ was considered while the  case   $r>2$
%was avoided in part due to the failure of the Hausdorff-Young inequality for $r>2$.

A natural question on H\"ormander type multipliers is how the minimal smoothness
 $s $ interplays with the  
 range of $p$'s on which boundedness is expected.   In the linear case, this question
was studied in \cite{CT},   \cite{Seeger}, and \cite{P1}. 
Let $L^r_s(\rn)$ be the Sobolev space consisting of all functions $h$
such that
$(I-\Delta)^{s/2}(h)\in L^r(\rn)$, where $\Delta$ is the Laplacian.
In the first paper of this series 
\cite{P1}, we showed that the conditions  $|1/2-1/p|< s/n$  and $rs>n$ 
imply $L^p(\mathbb R^n)$ boundedness for $1<p<\infty$ for $T_\si$ in the linear case $m=1$, when the 
 multiplier $\si$ lies   in the Sobolev space $L^r_s(\rn)$ 
uniformly over all annuli. 
%The endpoint case $|1/2-1/p|=s/n$ was considered in \cite{Seeger}.
%\bq{\color{red} Cite some linear results.}
%But what is the minimal smoothness we should pose on a multilinear multiplier to ensurethe boundedness of the related multilinear operator? 
This minimal smoothness problem in the bilinear setting was first studied in \cite{T} and later in \cite{MT} and \cite{GMT}. These references  contain necessary conditions on $s$ when the multiplier   in the Sobolev space $L^r_s$ with $r=2$; 
other values of $r$ were considered in \cite{GS}. 

Our goal here is to pursue the analogous bilinear question. In this   paper we 
focus on the  boundedness of $T_\si$ in the local $L^2$ case, i.e., the situation where $1\le p_1,p_2\le 2$ and $1\le p= 1/(1/p_1+1/p_2)\le 2$ under minimal smoothness conditions on $s$. It turns out that 
to express our result in an optimal fashion, we need to work with 
 $r>2$.  We also work with the   case $L^2\times L^2\to L^1$ as  boundedness in the remaining local $L^2$ 
 indices follows by duality and interpolation. 
 We achieve our goal via new technique to study 
 boundedness for bilinear operators based on
 tensor product wavelet decomposition     developed in \cite{GHH}; this technique was recently used to
 solve other problems; see \cite{He}.

The main   result  of this paper is the following theorem.

\begin{thm}\label{MR2} Suppose $\wh\psi\in\mathcal C_0^{\nf}(\bbr^{2n})$ is positive and  supported in the annulus 
$\{(\xi,\eta):1/2\le|(\xi,\eta)|\le 2\}$ such that $\sum_{j\in\mathbb Z}\wh\psi_j(\xi,\eta)=
\sum_{j}\wh\psi(2^{-j}(\xi,\eta))=1$ for $(\xi,\eta)\neq0$.
Let $ 1<r<\infty$, $s>\max\{n/2, 2n/r\}$, and suppose  there is a constant $A$ such that
\begin{equation}\label{usb}%Uniform Sobolev Norm
\sup_{j}\|\si(2^j\cdot)\wh\psi\|_{L^r_s(\bbr^{2n})}\le A<\nf.
\end{equation}
Then there is a constant $C=C(n,\Psi)$ such that  the bilinear operator 
$$
T_{\si}(f,g)(x)=
\int_{\bbr^{2n}}\si(\xi,\eta)\wh f(\xi)\wh g(\eta)e^{2\pi i x\cdot (\xi+\eta)}d\xi d\eta,
$$
initially defined on   Schwartz functions $f$ and $g$,
satisfies   
\begin{equation}
\|T_{\si}(f,g)\|_{L^1(\rn)}\le CA\|f\|_{L^2(\rn)}\|g\|_{L^2(\rn)}.
\end{equation}
\end{thm}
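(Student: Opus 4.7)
The approach I would take is the tensor-product wavelet decomposition technique from [GHH]. The crucial observation for the $\lt\times\lt\to\lo$ target is that H\"older's inequality $\|FH\|_{\lo}\le\|F\|_{\lt}\|H\|_{\lt}$ is available at the end, and this pairs perfectly with a decomposition of the symbol into tensor products: if $\si(\xi,\eta)=\sum_k a_k\al_k(\xi)\be_k(\eta)$, then $T_\si(f,g)(x)=\sum_k a_k\, T_{\al_k}f(x)\,T_{\be_k}g(x)$, and the problem reduces to bookkeeping of products of linear multipliers acting on $f$ and $g$ separately. A wavelet basis provides such a decomposition at every scale, with quantitative control of the coefficients coming from the Sobolev hypothesis on $\si$.

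\medskip

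First, a Littlewood--Paley decomposition $\si=\sum_j\si\wh\psi_j$ together with the dilation-invariance of the hypothesis (\ref{usb}) reduces matters to proving the desired estimate, uniformly in $j$, for a single symbol $\si_0$ supported in the unit annulus of $\bbr^{2n}$ with $\|\si_0\|_{L^r_s}\le A$; the sum over $j$ is then absorbed via the Littlewood--Paley square-function characterization of $\lt$, noting that at most $O(1)$ Littlewood--Paley blocks of $\wh f$ and $\wh g$ interact with each $\si_j$. Next, I fix a compactly supported tensor-product Daubechies wavelet basis of $\lt(\bbr^{2n})$ of the form $\Psi^{\la,G}_\mu(\xi,\eta)=\Phi^{G_1}_{\la,\mu_1}(\xi)\,\Phi^{G_2}_{\la,\mu_2}(\eta)$ with sufficient regularity and vanishing moments, and expand $\si_0$ in this basis to obtain
$$
T_{\si_0}(f,g)(x)=\sum_{\la\ge 0,\,G,\,\mu}c^{\la,G}_\mu\,F^{\la,G_1}_{\mu_1}(x)\,H^{\la,G_2}_{\mu_2}(x),
$$
where $F^{\la,G_i}_{\mu_i}=\mc F^{-1}(\Phi^{G_i}_{\la,\mu_i}\wh f)$ and $H^{\la,G_i}_{\mu_i}$ is defined analogously with $g$.

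\medskip

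The heart of the matter is the per-scale estimate. For fixed $\la\ge 0$ and $G$, I bound the $\lo$-norm of the corresponding summand by combining H\"older/Cauchy--Schwarz in the translation parameters $\mu=(\mu_1,\mu_2)$ with two ingredients: (a) the almost-disjointness of the frequency supports of $\{\Phi^{G_i}_{\la,\mu_i}\}_{\mu_i}$, which via Plancherel gives the square-function bound $\sum_{\mu_i}\|F^{\la,G_i}_{\mu_i}\|_{\lt}^2\ls 2^{\la n}\|f\|_{\lt}^2$ (and similarly for $g$); and (b) the wavelet characterization of $L^r_s(\bbr^{2n})$, which quantitatively controls $\bigl(\sum_\mu|c^{\la,G}_\mu|^r\bigr)^{1/r}$ in terms of $2^{-\la s}$, a normalization factor in $\la$, and $A$, together with the fact that at most $\ap 2^{2\la n}$ coefficients are nonzero per scale since $\si_0$ is supported in the unit annulus. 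A careful balance of exponents in the $\mu$-sum then produces a per-scale bound of the form $2^{-\la\min(s-n/2,\,s-2n/r)}A\|f\|_{\lt}\|g\|_{\lt}$, summable in $\la\ge 0$ precisely under the hypothesis $s>\max\{n/2,2n/r\}$. This per-scale arithmetic is the main obstacle: the two thresholds arise from distinct mechanisms--the $n/2$ from a Bernstein-type factor $2^{\la n/2}$ traded between $\lt$- and $\lnf$-norms of wavelet factors (equivalently, the $2^{\la n}$ Plancherel factor in (a)), and the $2n/r$ from the $\ell^r\hookrightarrow\ell^2$ embedding applied over the $\ap 2^{2\la n}$ relevant coefficients. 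A crude argument using only the $\ell^2$-norm of the coefficients yields the weaker threshold $s>n$ (which matches only the $r=2$ case); achieving the sharp statement requires tracking both thresholds simultaneously via judiciously chosen dual exponents in H\"older's inequality.
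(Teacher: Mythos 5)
Your overall architecture (Littlewood--Paley annular reduction, tensor-product wavelet expansion of the symbol, per-scale estimates summed over $\la$) matches the paper's strategy, and you correctly identify the two competing thresholds $n/2$ and $2n/r$. But there are two genuine gaps where the proposal papers over the parts that actually carry the proof.

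\textbf{First gap: the sum over $j$.} You claim that ``at most $O(1)$ Littlewood--Paley blocks of $\wh f$ and $\wh g$ interact with each $\si_j$.'' This is false in the bilinear setting and is precisely the point where bilinear Littlewood--Paley theory diverges from the linear case. The multiplier $\si_j$ lives on $\{|(\xi,\eta)|\sim 2^j\}$, which contains regions where $|\xi|\ll 2^j$ (or $|\eta|\ll 2^j$); there every low-frequency block of $\wh f$ interacts with $\si_j$, so the $j$-sum cannot be absorbed by a single square function in $f$ and $g$ simultaneously. The paper handles this by splitting each annular piece of the symbol into a \emph{diagonal part} (wavelets whose $\xi$- and $\eta$-supports are both bounded away from the axes, where one does get an almost-square-function bound at the cost of a harmless factor $\la$) and \emph{off-diagonal parts} (wavelets near $\{\xi=0\}$ or $\{\eta=0\}$), for which the low-frequency variable is controlled pointwise by the Hardy--Littlewood maximal function and only the other variable is square-functioned. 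Without this split, the reduction to a single annulus simply doesn't close.

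\textbf{Second gap: the per-scale estimate for $r>2$.} You acknowledge that the naive route through $\ell^r\hookrightarrow\ell^2$ plus Plancherel gives only $s>n$ when $r>2$, and then assert that the sharp threshold follows from ``judiciously chosen dual exponents in H\"older's inequality.'' I do not believe that is true, and you give no argument for it. If one only uses $\|\{a_\mu\}\|_{\ell^r}\ls 2^{\la(2n/r-n-s)}$, the cardinality bound $\#\mu\ls 2^{2\la n}$, and $\ell^2$/$\ell^\nf$ information on $\|F_{\mu_1}\|_{L^2}$ and $\|H_{\mu_2}\|_{L^2}$, then every H\"older-type pairing of the bilinear form $\sum_\mu |a_\mu|\,\|F_{\mu_1}\|\,\|H_{\mu_2}\|$ (including the sharp Schur/operator-norm bound on the matrix $(|a_{\mu_1\mu_2}|)$ for a worst-case coefficient matrix of the given $\ell^r$-size) still yields only $2^{\la(n-s)}$, i.e.\ $s>n$. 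What actually produces $s>n/2$ for $r\ge 4$ in the paper is a nontrivial combinatorial decomposition of the coefficient matrix: one stratifies by level sets $|b_\mu|\approx B2^{-\tau}$ and then, within each level, splits the set of surviving wavelets into ``heavy columns'' (columns containing $\gtrsim K=2^{\tau r/2}$ large coefficients) and the remainder. On the heavy part one bounds $\|\sum_l b_{kl}\tilde\om_l\wh g\|_{L^2}$ by $\sup_l|b_{kl}|\cdot 2^{\la n/r}\|g\|_{L^2}$ (disjoint supports, a genuine $\ell^\nf$ gain over $\ell^2$) and uses that the number $\gamma$ of heavy columns is $\ls 2^{\tau r/2}$; on the light part one exchanges the roles of $\xi$ and $\eta$ and uses that each column has $\ls K$ entries. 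Matching the two bounds over $\tau$ and summing $0\le\tau\le\tau_{\max}\sim 2n\la/r$ is exactly what trades the cardinality $2^{2\la n}$ against the $\ell^r$ information to land on $2^{\la(n/2-s)}$ (times a benign $\la$-factor). This level-set/column decomposition is the heart of the argument, not a detail, and it is not reproducible as a single H\"older application with cleverly chosen exponents.

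In short: the scaffolding and the thresholds you identify are right, but the two places you flag as ``to be balanced carefully'' are precisely where the new ideas live, and the tools you propose (square function for both $f$ and $g$; H\"older on the coefficient array) are insufficient there.
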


The optimality of \eqref{usb} in the preceding theorem is contained in the following result. 
\begin{thm}\label{MR3}
Suppose that for  $0<p_1,\dots , p_m<\infty$,  $p=(1/p_1+\cdots +1/p_m)^{-1}$,  we have
\begin{equation} \label{B1}  
\|T_\si\|_{L^{p_1}(\bbr^n)\times\cdots\times L^{p_m}(\bbr^n)\to L^p(\bbr^n)}\le C\sup_{j\in\mathbb Z}\|\si(2^j\cdot)\wh\Psi\|_{L^r_s(\bbr^{mn})}  
\end{equation}
for all bounded  functions $\si$ for which   $\sup_{j\in\mathbb Z}\|\si(2^j\cdot)\wh\Psi\|_{L^r_s(\bbr^{mn})}<\nf$ (for some fixed $r,s>0$).  
Then we must necessarily have $s\ge \max\{(m-1)n/2,mn/r\}$.
\end{thm}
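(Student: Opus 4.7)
The plan is to disprove (\ref{B1}) for $s$ below each of the two thresholds in $\max\{(m-1)n/2,mn/r\}$ by producing one-parameter families of multipliers for which $\|T_\sigma\|_{\mathrm{op}}/\|\sigma\|_{L^r_s}$ blows up.

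For the threshold $s\ge mn/r$, I would use a shrinking bump. Fix unit vectors $e_1,\dots,e_m$, pick $\Phi\in\mathcal C_0^\infty(\mathbb R^{mn})$ equal to $1$ on $B(0,1/2)$ and supported in $B(0,1)$, and set $\sigma_\delta(\xi_1,\dots,\xi_m) = \Phi(\delta^{-1}(\xi_1-e_1,\dots,\xi_m-e_m))$. Standard scaling yields $\|\sigma_\delta\|_{L^r_s}\sim \delta^{mn/r - s}$. Testing with $\widehat{f_j}(\xi) = \delta^{-n}\phi(\delta^{-1}(\xi-e_j))$ for a bump $\phi$ supported in $B(0,1/4)$ makes $\sigma_\delta\equiv 1$ on the Fourier support of $\otimes_j\widehat{f_j}$, so $T_{\sigma_\delta}(f_1,\dots,f_m) = f_1\cdots f_m$; since each $f_j(x) = e^{2\pi ie_j\cdot x}\check\phi(\delta x)$ is a modulation of the common physical bump $\check\phi(\delta\cdot)$, the ratio $\|f_1\cdots f_m\|_{L^p}/\prod_j\|f_j\|_{L^{p_j}}$ is $\sim 1$. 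The assumed bound (\ref{B1}) becomes $1\lesssim \delta^{mn/r-s}$, and as $\delta\to 0$ this forces $s\ge mn/r$.

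For the threshold $s\ge (m-1)n/2$, I would use a chirp on the sum variable. Fix $e\in\mathbb R^n$ with $|e|\sim 1$, a bump $\psi$ equal to $1$ near $e$, and define $\tau_N(\xi)=\psi(\xi)e^{iN|\xi|^2}$. Choose $e_j\in\mathbb R^n$ with $\sum_j e_j = e$ and bumps $\Psi_j$ supported near $e_j$, and set $\sigma_N(\xi_1,\dots,\xi_m) = \tau_N(\xi_1+\cdots+\xi_m)\prod_j\Psi_j(\xi_j)$. A Leibniz-type computation using $|\partial^\alpha \tau_N|\lesssim N^{|\alpha|}$ on the support of $\psi$ gives $\|\sigma_N\|_{L^r_s}\sim N^s$, uniformly in $r$. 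The key multiplier identity is
\[
T_{\sigma_N}(f_1,\dots,f_m)=T_{\tau_N}(g_1\cdots g_m),\qquad g_j = (\Psi_j\widehat{f_j})^{\vee},
\]
so one chooses $g_j$ to be a mollified version of $\chi_{B(0,1)}(x)e^{2\pi ie_j\cdot x}$, making $\prod_j g_j\approx \chi_{B(0,1)}e^{2\pi ie\cdot x}$ and $\prod_j\|g_j\|_{L^{p_j}}\sim 1$. A stationary-phase analysis of the convolution kernel of $T_{\tau_N}$ shows that it is concentrated on the annulus $|y|\sim N$ with amplitude $\sim N^{-n/2}$, yielding $\|T_{\tau_N}(\prod_j g_j)\|_{L^p}\sim N^{n(1/p-1/2)}$ in the range $p\le 2$. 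In the symmetric configuration $p_j=2$, $p=2/m$ this equals $N^{(m-1)n/2}$, and comparison with $\|\sigma_N\|_{L^r_s}\sim N^s$ forces $s\ge (m-1)n/2$.

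The main obstacle is that this chirp-on-sum construction directly produces only $s\ge n(1/p-1/2)$, which coincides with $(m-1)n/2$ only at the symmetric point $p=2/m$. To obtain $(m-1)n/2$ for the general admissible tuple $(p_1,\dots,p_m)$ one can appeal to the multilinear duality $T_\sigma:L^{p_1}\times\cdots\times L^{p_m}\to L^p \iff L^{p_1}\times\cdots\times L^{p_{m-1}}\times L^{p'}\to L^{p_m'}$, combined with argument permutations, to reduce to a configuration where the chirp saturates. A secondary technical point is the Leibniz bookkeeping needed to verify that composing $\tau_N$ with the linear substitution $(\xi_1,\dots,\xi_m)\mapsto\xi_1+\cdots+\xi_m$ does not inflate the isotropic $L^r_s(\mathbb R^{mn})$ norm beyond the $N^s$ already present in $\|\tau_N\|_{L^r_s(\mathbb R^n)}$.
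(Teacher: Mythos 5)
Your construction for the threshold $s\ge mn/r$ is essentially the paper's Proposition~\ref{SIS}: a bump of small scale $\delta\sim 1/N$ in $\mathbb R^{mn}$ tested against shrinking bumps in each slot; the ratio $\|T_\sigma(f_1,\dots,f_m)\|_{L^p}/\prod_j\|f_j\|_{L^{p_j}}$ is $\sim 1$ because the H\"older relation cancels all scalings, while the Sobolev norm scales like $\delta^{mn/r-s}$. No problem there.

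The chirp construction for the threshold $s\ge (m-1)n/2$ has a genuine gap. As you acknowledge, the chirp-on-sum only yields $s\ge n(1/p-1/2)$, which equals $(m-1)n/2$ exactly at $p_1=\cdots=p_m=2$. Multilinear duality plus slot permutations does not repair this: those operations simply permute the $(m+1)$-vector $(1/p_1,\dots,1/p_m,1/p')$, whose entries sum to $1$, and the best chirp exponent attainable on the orbit is $n\big(\frac12-\min\{1/p_1,\dots,1/p_m,1/p'\}\big)$, which is strictly below $(m-1)n/2$ whenever every entry of that vector is positive. Concretely, for $m=2$ and $(p_1,p_2,p)=(4,4,2)$ the orbit is $\{(4,4,2),(2,4,4/3),(4,2,4/3)\}$ and the best chirp exponent is $n/4$, whereas the theorem requires $s\ge n/2$ for that tuple; and when $p>2$ (e.g.\ $(8,8,4)$) the chirp exponent is negative on the whole orbit and gives nothing.

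The structural reason for the shortfall is that you test with fixed-scale bumps $g_j$ whose $L^{p_j}$-norms are all $\sim 1$, so neither the input norms nor the output norm carry any $p_j$-dependence. The paper's Proposition~\ref{Sq} instead uses Rademacher-randomized test functions whose Fourier transforms are spread over $\sim N^n$ cells of side $1/N$ inside the annulus; Khintchine then gives $\|f_j\|_{L^{p_j}}\sim N^{n(1/p_j-1/2)}$, and a collision count in the sum variable together with a third Rademacher family inside the symbol gives $\|T_{\sigma_N}(f_1,\dots,f_m)\|_{L^p}\sim N^{n(1/p-1/2)}$ with $\|\sigma_N\|_{L^r_s}\lesssim N^s$. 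The H\"older relation then forces
\[
\frac{N^{n(1/p-1/2)}}{\prod_{j}N^{n(1/p_j-1/2)}}=N^{(m-1)n/2}
\]
for every admissible tuple; it is precisely this $p_j$-dependent cancellation that the fixed-scale chirp test cannot reproduce. To salvage your approach you would have to randomize the Fourier supports of the $g_j$ and re-derive the output bound, which in effect reconstructs the paper's example.
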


Finally, we have another set of necessary conditions for the boundedness of $m$-linear multipliers.  
The sufficiency of these conditions is shown in the third paper of this series. 

\begin{thm}\label{PL} Suppose 
there exists a constant $C$ such that \eqref{B1} holds for   all $\si$ such that the right hand side is finite.
Then we must necessarily have 
$$
\f 1 p-\f 1 2\le \f s n+\sum_{i\in I}\Big(\f 1{p_i}-\f 1 2\Big),
$$
where $I$ is an arbitrary subset of $\{1,2,\dots,\ m\}$ which may also  be empty (in which case the sum is supposed to be zero).
\end{thm}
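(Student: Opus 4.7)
The plan is to establish the claimed inequality for each fixed subset $I\subseteq\{1,\ldots,m\}$ separately, by constructing, for each such $I$, a suitable family of counterexamples to \eqref{B1}. For a fixed $I$, I would choose generic unit vectors $v_1,\ldots,v_m\in\bbr^n$, smooth bumps $\wh\rho,\wh\phi\in C_0^{\infty}(\bbr^n)$ with compact support in the unit ball, and a scale $\delta>0$; the multiplier $\si_\delta$ would be a tensor-product modulated bump centered at $(v_1,\ldots,v_m)\in\bbr^{mn}$ with width $\delta$ in the blocks $i\in I$ and unit width in the blocks $i\notin I$, normalized so that $\sup_j\|\si_\delta(2^j\cdot)\wh\Psi\|_{L^r_s(\bbr^{mn})}$ is $O(1)$. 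The test functions $f_{i,\delta}$ are chosen with $\wh f_{i,\delta}$ matching the widths of $\si_\delta$ in each block.

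Once these are fixed, three computations are needed. First, one checks that the chosen normalization indeed keeps the Sobolev norm $O(1)$ uniformly, which amounts to rescaling on the single annulus meeting the support of $\si_\delta$. Second, the $L^{p_i}$-norms of the test functions are computed directly from their scaling, giving $\|f_{i,\delta}\|_{p_i}=O(\delta^{n/p_i'})$ for $i\in I$ and $O(1)$ for $i\notin I$. Third, because $\si_\delta$ factors as a tensor product, the output $T_{\si_\delta}(f_{1,\delta},\ldots,f_{m,\delta})$ factors as a pointwise product of one-variable modulated convolutions, and its $L^p$ norm is an explicit power of $\delta$. Substituting these three quantities into \eqref{B1} and letting $\delta\to 0^+$ extracts a necessary condition; the scalings can be tuned (by varying the widths in each block and the cardinalities of a lattice of modulated bumps) so that this extracted condition is exactly the one claimed for the chosen $I$.

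The main difficulty is that a naive tensor-product construction extracts only a condition of the form $s/n\ge|I|/r-\sum_{i\in I}1/p_i$, which is strictly weaker than the claim when $r$ is large. To recover the $(|I|-1)/2$ contribution on the right-hand side of the theorem, I would refine the multiplier in each $I$-block by replacing the single bump by a randomly signed sum over a $\delta^{-1}$-separated lattice of modulated bumps of width $\delta$, and the test functions by matching randomly signed sums. Applying Khintchine's inequality to the random signs on both sides of \eqref{B1} promotes an $\ell^1$-summation into an $\ell^2$-summation in $|I|$ independent factors, which injects the missing powers of $\delta^{1/2}$ and yields exactly the claimed $(|I|-1)/2$ contribution. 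Verifying that the randomized $\si_\delta$ still has bounded Sobolev norm (using the $\delta^{-1}$-separation to make the supports almost disjoint) is the key technical step, while choosing the lattice and the signs so that the lower bound on $\|T_{\si_\delta}\|_{L^p}$ survives the randomization is the other.
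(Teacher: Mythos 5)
Your proposed construction differs from the paper's in a way that is not cosmetic: it fails to produce the required lower bound on $\|T_{\si}\|_{L^p}$.

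The paper's multiplier for the index set $I=\{1,\dots,k\}$ (taking $n=1$ for clarity) is
$$
\si_N(\xi_1,\dots,\xi_m)=\sum_{j_1,\dots,j_m}a_{j_1+\cdots+j_m}(t)\,c_{j_1+\cdots+j_m}\,a_{j_1}(t_1)\cdots a_{j_k}(t_k)\prod_{i=1}^m\phi(N\xi_i-j_i),
$$
paired with lattice test functions in \emph{every} block ($\wh{f_N}$ with matching random signs for $i\in I$, and the unsigned lattice sum $\wh{g_N}=\sum_j\wh\vp(N\xi-j)$ for $i\notin I$). This $\si_N$ is emphatically \emph{not} a tensor product: the factor $a_{j_1+\cdots+j_m}(t)\,c_{j_1+\cdots+j_m}$ correlates all $m$ frequency blocks. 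When the operator is applied, the $a_{j_i}(t_i)^2$ cancel, and what is left is a genuinely random sum over the output frequency $l=j_1+\cdots+j_m$, with a Rademacher coefficient $a_l(t)$ and a multiplicity weight of size $\approx N^{m-1}$. Khintchine in the fresh variable $t$ then gives $\|T_{\si_N}(\cdot)\|_{L^p}\approx N^{1/p-1/2}$, \emph{independently of $k$}, which is precisely what drives the inequality.

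Your construction is a tensor product $\si_\delta=\prod_i\phi_i(\xi_i)$. If you match the random signs between the multiplier and the test functions, they cancel blockwise and the output $T_{\si_\delta}$ is deterministic: each $I$-block contributes a factor $N^{-1}\vp(x/N)\sum_j e^{2\pi ijx/N}$, a Dirichlet/Fej\'er-type kernel, and a direct computation gives $\|T_{\si_\delta}\|_{L^p}\approx 1$ (for $p>1/2$), not $N^{1/p-1/2}$. If instead you leave the random signs only in the test functions, the output is random but the Khintchine argument produces $\|T_{\si_\delta}\|_{L^p}$ of order $N^{-|I|/2}$, again far too small. In either case, plugging into \eqref{B1} yields a necessary condition on $s$ that is \emph{trivially satisfied} in the local $L^2$ regime. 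For instance, with $I=\{1,\dots,m\}$ and $p_1=\cdots=p_m=2$ your argument yields $s\ge 0$, whereas the theorem asserts $s\ge(m-1)n/2$. The ``$\ell^1$ to $\ell^2$ promotion'' you invoke does not happen, because there is no surviving randomness in $T_{\si_\delta}$ to which Khintchine can be applied on the left-hand side.

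Two secondary points. First, the spacing of the modulated bumps should be $\delta$ (so roughly $\delta^{-1}$ bumps per side fit in the annulus), not ``$\delta^{-1}$-separated'' as written; the latter is vacuous. Second, for $i\notin I$ the paper uses the full lattice sum $\wh{g_N}$ (which has $\|g_N\|_{L^{p_i}}=O(1)$ by Lemma 4.3 of \cite{P1}) rather than a single unit-width bump; this is needed so that the indices $j_i$ for $i\notin I$ actually vary and contribute to the multiplicity count for $l=j_1+\cdots+j_m$. With single bumps in the non-$I$ blocks the output norm drops further, to about $N^{-1/2}$, and again the extracted condition is strictly weaker than the claim. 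The essential missing idea is the non-tensor cross-term $a_{j_1+\cdots+j_m}(t)c_{j_1+\cdots+j_m}$, and without it the proof does not close.
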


\section{Preliminaries}
%\subsection{Wavelets}
We utilize wavelets with compact supports. 
Their existence is due to Daubechies \cite{Dau} and their construction is contained  in Meyer's book \cite{Meyer1} and Daubechies' book \cite{DauB}. 
For our purposes we need
product type smooth wavelets with compact supports; the construction  of  
such objects   we use here can be found in Triebel~\cite[Proposition 1.53]{Tr1}.   

\begin{lm}\label{TrDau}
For any fixed $k\in \mathbb N$ there exist real compactly supported functions $\psi_F,\psi_M\in \mathcal C^k(\mathbb R)$, the class of functions with continuous derivatives of
order up to $k$,
which
satisfy that $\|\psi_F\|_{L^2(\mathbb R)}=\|\psi_M\|_{L^2(\mathbb R)}=1$
and $\int_{\mathbb R}x^{\al}\psi_M(x)dx=0$ for $0\le\al\le k$,
such that,  if $\Psi^G$ is defined by 
$$
\Psi^{G}(\vec x\,)=\psi_{G_1}(x_1)\cdots \psi_{G_{2n}}(x_{2n}) 
$$
for   $G=(G_1,\dots, G_{2n})$ in the set     
$$
 \mathcal I :=\Big\{ (G_1,\dots, G_{2n}):\,\, G_i \in \{F,M\}\Big\}  \, , 
%\mathcal I^*= \mathcal I\setminus \{(F,\dots , F)\}
$$
then the  family of 
functions 
$$
\bigcup_{\vec \mu \in  \mathbb Z^{2n}}\bigg[  \Big\{   \Psi^{(F,\dots, F)} (\vec  x-\vec \mu  )  \Big\} \cup \bigcup_{\la=0}^\nf
\Big\{  2^{\la n}\Psi^{G} (2^{\la}\vec x-\vec \mu):\,\, G\in \mathcal I\setminus \{(F,\dots , F)\}  \Big\}  
  \bigg]
$$
forms an orthonormal basis of $L^2(\mathbb R^{2n})$, where $\vec x= (x_1, \dots , x_{2n})$.  
\end{lm}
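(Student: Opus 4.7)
The plan is to build the orthonormal basis in three stages by bootstrapping from the one-dimensional case via tensor products. First, I would invoke Daubechies' construction to obtain on $\mathbb{R}$ a scaling (father) function $\psi_F$ and a mother wavelet $\psi_M$, both real, compactly supported, of class $\mathcal{C}^k$, unit $L^2$-norm, with $\int_{\mathbb R} x^\alpha \psi_M(x)\,dx=0$ for $0\le \alpha\le k$, and such that
$$
\{\psi_F(\cdot - j)\}_{j\in\mathbb Z}\ \cup\ \{2^{\lambda/2}\psi_M(2^\lambda \cdot - j)\}_{\lambda\ge 0,\,j\in\mathbb Z}
$$
is an orthonormal basis of $L^2(\mathbb R)$. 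The existence of such a pair, with both the smoothness and the vanishing moments growing with $k$, is the heart of the wavelet construction and is the substantive input; everything else is bookkeeping.

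Second, I would convert this one-dimensional basis into the multiresolution framework
$L^2(\mathbb R)=V_0\oplus\bigoplus_{\lambda\ge 0}W_\lambda$, where $V_0$ is the closed span of the translates $\psi_F(\cdot - j)$ and $W_\lambda$ is the closed span of the scaled translates $2^{\lambda/2}\psi_M(2^\lambda \cdot - j)$. Taking the $2n$-fold Hilbert space tensor product $L^2(\mathbb R^{2n})=\bigotimes_{i=1}^{2n}L^2(\mathbb R)$ and expanding each factor through its own telescoping decomposition yields an orthogonal splitting indexed by a choice in each coordinate. Grouping the summands at common scale, the coarse level contributes precisely the translates of $\Psi^{(F,\dots,F)}$, and the $\lambda$th dyadic increment $V_{\lambda+1}(\mathbb R^{2n})\ominus V_\lambda(\mathbb R^{2n})$ is the direct sum, over $G\in\mathcal I\setminus\{(F,\dots,F)\}$, of the tensor spaces spanned by $2^{\lambda n}\Psi^{G}(2^\lambda \vec x - \vec\mu)$, $\vec\mu\in\mathbb Z^{2n}$. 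The normalization $2^{\lambda n}$ is exactly the $L^2(\mathbb R^{2n})$-isometric scaling factor.

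Third, I would verify the properties required by the statement. Orthonormality across different indices $(\lambda,G,\vec\mu)$ follows coordinatewise from the one-dimensional orthonormal basis property. Compact support of each $\Psi^G$ follows from compact support of $\psi_F$ and $\psi_M$. The $\mathcal C^k$ regularity is inherited factor-by-factor, and the vanishing moment property of $\psi_M$ transfers to mixed moments of $\Psi^G$ along any coordinate where $G_i=M$. Completeness in $L^2(\mathbb R^{2n})$ reduces to completeness of the one-dimensional basis via the standard density of finite tensors in the Hilbert space tensor product.

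The only genuine obstacle is the first step: obtaining a compactly supported pair $(\psi_F,\psi_M)$ with arbitrarily prescribed smoothness $k$ and $k$ vanishing moments requires the careful design of a finite quadrature mirror filter whose associated cascade converges in $\mathcal C^k$. This is Daubechies' theorem, and I would simply cite \cite{Dau}, \cite{DauB}, with the multivariate tensor formulation carried out as in Triebel~\cite[Proposition 1.53]{Tr1}. Given that input, the remaining argument is routine multiresolution analysis.
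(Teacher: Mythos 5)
Your proposal is correct and follows the standard multiresolution tensor-product construction. The paper itself does not reprove this lemma — it cites Triebel~\cite[Proposition 1.53]{Tr1} — and your sketch reproduces exactly the argument underlying that reference: one-dimensional Daubechies scaling/wavelet pair from \cite{Dau}, \cite{DauB}, the nested MRA $V_0\subset V_1\subset\cdots$, and the isotropic splitting $V_{\lambda+1}^{\otimes 2n}\ominus V_\lambda^{\otimes 2n}=\bigoplus_{G\ne(F,\dots,F)}\bigotimes_i W_\lambda^{(G_i)}$ obtained by applying $V_{\lambda+1}=V_\lambda\oplus W_\lambda$ in each factor at a fixed common scale $\lambda$. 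One small caution on the wording: the phrase ``expanding each factor through its own telescoping decomposition yields an orthogonal splitting indexed by a choice in each coordinate'' could be misread as allowing different dyadic scales in different coordinates, which would produce anisotropic wavelets $2^{\lambda_1/2}\psi_{G_1}(2^{\lambda_1}x_1-\mu_1)\cdots$ that cannot be regrouped into the isotropic family in the statement; the correct route, which you in fact use in the next sentence, is to work with the $2n$-fold tensor $V_\lambda^{\otimes 2n}$ at a single scale and then telescope in $\lambda$. With that clarification the argument is complete, and the normalization $2^{\lambda n}=2^{\lambda\cdot 2n/2}$ is indeed the $L^2(\mathbb R^{2n})$-isometric scaling.
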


%\subsection{Smoothness spaces}
In order to prove  our results, we use the wavelet characterization of Sobolev spaces, following Triebel's book \cite{Tr1}. Let us fix the smoothness $s,$ for our purposes we always have $s\leq n+1.$ Also, we only work with spaces with the integrability index $r>1.$ 
Take $\vp$ as a smooth function defined on $\mathbb R^{2n}$ such that $\wh\vp$ is supported in
the unit annulus such that $\sum_{j=0}^\nf\wh \vp_j=1$, where
$\wh\vp_j=\wh\vp(2^{-j}\cdot)$ for $j\ge1$ and $\wh\vp_0=\sum_{k\le 0}\wh\vp(2^{-k}\cdot)$.
Then  for a distribution $f\in\mathcal S'(\mathbb R^{2n})$  we define the $F^s_{r,q}$ norm as follows: 
$$
\|f|F^s_{r,q}(\mathbb R^{2n})\|=
\Big\|\big(\sum_{j=0}^\nf2^{jsq}|(\vp_j\wh f\,)^{\vee}(\cdot)|^q\big)^{1/q}\Big\|_{L^r(\mathbb R^{2n})}.
$$
We then pick   wavelets with smoothness and cancellation degrees $k=6n.$ This number suffices for the purposes 
of the following  lemma.

\begin{lm}[{\cite[Theorem 1.64]{Tr1}}]\label{TrSo}
Let $0<r<\nf,\ 0<q\le\nf, \ s\in\mathbb R$, and
for $\la\in \mathbb N$ and $\vec\mu\in\mathbb N^{2n}$ let $\chi_{\la\vec\mu}$
be the characteristic function of the cube $Q_{\la\vec\mu}$ centered at $2^{-\la}\vec\mu$ with length
$2^{1-\la}$.  For a sequence $\ga=\{\ga^{\la,G}_{\vec\mu}\}$ define the norm
$$
\|\ga|f^s_{r,q}\|=
\Big\|(\sum_{\la,G,\vec\mu}2^{\la sq}|\ga^{\la,G}_{\vec\mu}\chi_{\la\vec\mu}(\cdot)|)^{q/2}\Big\|_{L^r(\mathbb R^{2n})}.
$$

Let $\mathbb N\ni k>\max
\{s,\f{4n}{\min(r,q)}+n-s\}$.
Let $\Psi_{\vec\mu}^{\la,G}$ be the $2n$-dimensional Daubechies wavelets with smoothness $k$
according Lemma \ref{TrDau}.
Let $f\in \mathcal S'(\mathbb R^{2n})$. Then $f\in F^s_{r,q}(\mathbb R^{2n})$ if and only if
it can be represented as 
$$
f=\sum_{\la,G,\vec\mu}\ga^{\la,G}_{\vec\mu}2^{-\la n}\Psi^{\la,G}_{\vec\mu}
$$
with $\|\ga|f_{rq}^s\|<\nf$ with 
unconditional convergence   in $\mathcal S'(\rn)$. Furthermore this representation
is unique,
$$
\ga_{\vec\mu}^{\la,G}=2^{\la n}\langle f,\Psi^{\la,G}_{\vec\mu}\rangle,
$$
and
$$
I: f\to\big\{2^{\la n}\langle f,\Psi^{\la,G}_{\vec\mu}\rangle\big\}
$$
is an isomorphic map of $F^s_{r,q}(\mathbb R^{2n})$
onto $f^s_{r,q}.$

\end{lm}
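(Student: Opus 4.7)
The plan is to establish the two-sided equivalence $\|\gamma\mid f^s_{r,q}\| \approx \|f\mid F^s_{r,q}(\bbr^{2n})\|$ for the coefficients $\gamma^{\la,G}_{\vec\mu} = 2^{\la n}\langle f, \Psi^{\la,G}_{\vec\mu}\rangle$, via off-diagonal estimates between the Littlewood--Paley pieces $\vp_j$ and the wavelet system, combined with the Fefferman--Stein vector-valued maximal inequality applied twice (once in each direction).

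For the direct direction, I write $f_j=(\vp_j\wh f\,)^{\vee}$ and bound $\langle f_j, \Psi^{\la,G}_{\vec\mu}\rangle$ in three regimes. When $|\la-j|\le C$, the wavelet acts as a smooth $L^1$-normalized bump concentrated in the cube $Q_{\la\vec\mu}$, so the pairing is controlled by a Peetre maximal function $(f_j^*)_\tau$ evaluated at $2^{-\la}\vec\mu$, for a parameter $\tau$ to be chosen. When $j \ge \la + C$, the $\mc C^k$ smoothness of $\Psi^{\la,G}_{\vec\mu}$ together with $\vp_j$'s high-frequency localization allows $k$ integrations by parts and yields a gain $2^{-(j-\la)k}$. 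When $j \le \la - C$, the vanishing moments of $\psi_M$ through order $k$ (available for every $G\ne(F,\dots,F)$) enable a Taylor expansion of $f_j$ across the compact support of $\Psi^{\la,G}_{\vec\mu}$, producing a gain $2^{-(\la-j)k}$. Combining the three cases gives a pointwise bound of the schematic form $|\gamma^{\la,G}_{\vec\mu}| \ls \sum_j 2^{-|j-\la|(k-s)}\,2^{(j-\la)s}\,(f_j^*)_\tau(2^{-\la}\vec\mu)$.

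Inserting this into the sequence norm $\|\gamma\mid f^s_{r,q}\|$, exchanging summations, and applying the Fefferman--Stein inequality to the family $\{2^{js}(f_j^*)_\tau\}_j$ recovers $\|f\mid F^s_{r,q}\|$; here $\tau$ must be chosen strictly below $\min(r,q)$ for Fefferman--Stein to apply, and the geometric factor $2^{-|j-\la|(k-s)}$ must sum in $j$. The reverse estimate is symmetric: starting from $f = \sum \gamma^{\la,G}_{\vec\mu}2^{-\la n}\Psi^{\la,G}_{\vec\mu}$, one analyzes $(\vp_j\wh{\Psi}^{\la,G}_{\vec\mu})^{\vee}$ using the same three regimes, dominates $(\vp_j\wh f\,)^{\vee}$ pointwise by a discrete convolution of $\{2^{\la s}\gamma^{\la,G}_{\vec\mu}\chi_{\la\vec\mu}\}_\la$ against a kernel rapidly decaying in $|j-\la|$, and applies Fefferman--Stein once more. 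Uniqueness of the expansion and the isomorphism property of $I$ then follow from orthonormality of the wavelet basis and density of the finite sums.

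The principal obstacle is calibrating the off-diagonal decay to defeat both the $2^{\la s}$ factor built into the norm $f^s_{r,q}$ and the $2n/\min(r,q)$ loss incurred when bounding sums over translated cubes $Q_{\la\vec\mu}$ by the Hardy--Littlewood maximal function. The hypothesis $k > \max\{s,\, 4n/\min(r,q)+n-s\}$ is precisely engineered for this: it simultaneously ensures $k-s>0$, so the geometric series in $|j-\la|$ converges after absorbing the $2^{(j-\la)s}$ factor, and it permits the choice of $\tau$ slightly below $\min(r,q)$ with enough regularity/cancellation headroom for a safe application of Fefferman--Stein to the sequence of Peetre maximal functions.
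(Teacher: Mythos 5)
This statement is cited by the paper as \cite[Theorem 1.64]{Tr1}; no proof is given, since it is a standard result on wavelet characterizations of inhomogeneous Triebel--Lizorkin spaces taken verbatim from Triebel's book. There is therefore no ``paper proof'' to compare against, only the reference.

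That said, your sketch is a correct and reasonably faithful outline of how such characterizations are actually proved (and is consistent with the structure of Triebel's argument and of the earlier Frazier--Jawerth $\vp$-transform machinery it rests on): decompose against Littlewood--Paley pieces, prove off-diagonal decay in the three regimes $|j-\la|\le C$, $j\gg\la$ (smoothness/integration by parts), $j\ll\la$ (vanishing moments/Taylor expansion), dominate by Peetre maximal functions, and close with Fefferman--Stein vector-valued maximal estimates in both directions. Two small points worth making explicit in a full write-up. First, the basis of Lemma~\ref{TrDau} is inhomogeneous, so the scaling-function blocks $G=(F,\dots,F)$ occur only at $\la=0$; since $\wh\vp_0$ already absorbs all frequencies $|\xi|\ls 1$, the regime $j<\la$ never arises for those blocks, and the absence of vanishing moments for $\psi_F$ causes no trouble --- you gesture at this but it deserves a sentence. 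Second, the hypothesis $k>\max\{s,\,4n/\min(r,q)+n-s\}$ must be checked to supply both the margin $k-s>0$ needed for convergence of the geometric tails and the Peetre exponent $\tau<\min(r,q)$ needed for the maximal inequality; the displayed form of the threshold reflects that the ambient dimension here is $2n$, and your prose description of its role is accurate. In short: correct strategy, consistent with the cited source, with only minor bookkeeping left to spell out.
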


\begin{comment}
First, let us denote by $Q_{j,m}$ the dyadic cubes of generation $j$ (sidelength $2^{-j}$). Next, we define a sequence space $f_{r,q}^s$ by
$$
\|\gamma\|_{r,q}^s=\left\|\left(\sum_{j,G,m}2^{jsq}|\gamma^{j,G}_m \chi_{Q_{j,m}}|^q\right)^{1/q}\right\|_r
$$
Then the Triebel-Lizorkin space $F_{r,q}^s(\mathbb R^{2n})$ may be written using the wavelets we fixed as follows
$$f\in F_{r,q}^s\qquad {\rm if\; and \; only \; if} \qquad \gamma \in f_{r,q}^s\;{\rm and}\;f=\sum_{j,G,m}2^{jsq}\gamma^{j,G}_m2^{-jn}\Psi^{j,G}_{m}$$
and $\|f\|_{r,q}^s\approx \|\gamma\|_{f_{r,q}^s}.$ 
\end{comment}
In particular, the Sobolev space $L^r_s(\mathbb R^{2n})$ coincides with $F_{r,2}^s(\mathbb R^{2n})$. In the proof of our results, we use  for fixed $\la$ the following estimate:
\begin{equation}\label{1L}
\Big\|\Big(\sum_{G,\vec\mu }|\langle \si, \Psi^{\lambda,G}_{\vu} \rangle \Psi^{\lambda,G}_{\vu}|^2\Big)^{1/2}\Big\|_{L^r}\leq 
C\|\si\|_{L^r_s}2^{-s\lambda}.
\end{equation}
%The constant $C$ depends only on dimension of the underlying space. 
To verify this,  by Lemma \ref{TrSo}, 
we have 
$$
\bigg\|\sum_{G,\vu}2^{\lambda s}|\gamma^{\lambda,G}_\vu \chi_{Q_{\lambda,\vu}}|\bigg\|_{L^r} \leq C\|\si\|_{L^r_s},
$$
with  $\gamma^{\lambda,G}_\vu=2^{\lambda n}\langle \si, \Psi^{\lambda,G}_\vu\rangle.$ 
Notice that $2^{-\lambda n}\Psi^{\lambda,G}_{\vu}$ are $L^{\infty}$ normalized wavelets, and there
exists an absolute  constant $B$ such that the support of 
$\Psi^{\lambda,G}_{\vu}$ is always contained in $\cup_{|\vec\nu|\le B} Q_{\lambda,\vu+\vec\nu}$.
This then implies~\eqref{1L}.

\section{The main lemma}\label{MLS}%Main Lemma Section
 Let $Q$ denote  the  cube $[-2,2]^{2n}$ in $\mathbb R^{2n}$, and consider a Sobolev space  $L^r_s(Q)$ as the Sobolev space 
 of distributions supported in  $Q $ which are in $L^r_s(\mathbb R^{2n})$.

\begin{lemma}\label{MR}%Main Result
For $r\in (1,\infty)$ let $s>\max(n/2,2n/r)$ and suppose $\si \in L^r_s(Q).$ 
Then $\si$ is a bilinear multiplier  bounded   from $L^2(\rn)\times L^2(\rn)$ to $L^1(\rn)$.
\end{lemma}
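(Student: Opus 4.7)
I would expand $\sigma$ in the tensor-product wavelet basis from Lemma \ref{TrDau} and analyze the bilinear operator scale by scale. By Lemma \ref{TrSo} we can write $\sigma = \sum_{\lambda \ge 0}\sigma_\lambda$ with $\sigma_\lambda = \sum_{G,\vu}\gamma^{\lambda,G}_{\vu}\cdot 2^{-\lambda n}\Psi^{\lambda,G}_{\vu}$, noting that the compact support of $\sigma$ in $Q$ restricts each scale to $O(2^{2n\lambda})$ relevant wavelets. Splitting $G = (G', G'')$ and $\vu = (\mu', \mu'')$, the central feature I plan to exploit is the tensor-product factorization in the variables $\xi, \eta \in \rn$,
\[
2^{-\lambda n}\Psi^{\lambda, G}_{\vu}(\xi, \eta) = \Psi^{G'}(2^\lambda \xi - \mu')\Psi^{G''}(2^\lambda \eta - \mu''),
\]
so that the bilinear operator with single-wavelet symbol factors as the pointwise product $f^{G',\lambda}_{\mu'}(x)\, g^{G'',\lambda}_{\mu''}(x)$ of two linear projections whose frequency supports are cubes of side $\sim 2^{-\lambda}$ with bounded overlap.

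Applying Cauchy--Schwarz pointwise in $(G,\vu)$ and then once more in $x$, together with Plancherel and the finite overlap of the frequency cells, I would obtain the per-scale bound
\[
\|T_{\sigma_\lambda}(f,g)\|_{L^1(\rn)} \le C\, \Big(\sum_{G, \vu}|\gamma^{\lambda,G}_{\vu}|^2\Big)^{1/2}\|f\|_{L^2(\rn)}\|g\|_{L^2(\rn)}.
\]
To control the $\ell^2$ norm of the wavelet coefficients in terms of $\|\sigma\|_{L^r_s}$ I would invoke the key estimate \eqref{1L}, combined with H\"older or Sobolev embedding on the compact set $Q$ to interconvert $L^r$ and $L^2$ norms of the square function $(\sum|\gamma^{\lambda,G}_{\vu}|^2\chi_{Q_{\lambda,\vu}})^{1/2}$. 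This yields a bound of the form $\|\gamma^\lambda\|_{\ell^2} \le C 2^{-\alpha\lambda}\|\sigma\|_{L^r_s}$ for a suitable decay rate $\alpha$; summing over $\lambda \ge 0$ then gives the target $L^1$ estimate. For $r \le 2$ the Sobolev embedding $L^r_s(Q) \hookrightarrow L^2_{s+n-2n/r}(Q)$ reduces matters to the $L^2$ case under the assumption $s > 2n/r$, while for $r > 2$ the compact support passes directly from $L^r_s$ to $L^2_s$.

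The hard part is to achieve the sharp threshold $s > n/2$ in the regime $r > 4$, where the straightforward execution of the steps above forces the summability criterion $s > n$ and thus falls short of the hypothesis. Closing this gap will require a more delicate use of \eqref{1L}: rather than immediately collapsing the square function on the right-hand side of \eqref{1L} to its $L^2$ norm, I would keep the full $L^r$-valued structure and exploit the phase-space localization of the wave packets $f^{G',\lambda}_{\mu'}$, in particular the fact that the products $f^{G',\lambda}_{\mu'}g^{G'',\lambda}_{\mu''}$ group naturally into band-limited components indexed by $\mu'+\mu''$. Balancing the scale factors between this refined pointwise bound and the $L^r$ square-function estimate is the technical heart of the proof, and is the step where the genuinely bilinear nature of the problem -- as opposed to a Wiener-algebra type reduction -- is essential.
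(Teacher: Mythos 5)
Your outline correctly sets up the wavelet decomposition, the per-scale Cauchy--Schwarz bound $\|T_{\sigma_\lambda}(f,g)\|_{L^1}\lesssim \|\gamma^\lambda\|_{\ell^2}\|f\|_{L^2}\|g\|_{L^2}$, and the use of \eqref{1L}; and you correctly diagnose that this straightforward route gives summability only under $s>n$ once $r>2$ (the obstruction in fact already appears for $2<r<4$, not just $r>4$, since the sharp threshold there is $2n/r<n$). But the proposal stops exactly at the point where the real work begins: the final paragraph gestures at ``a more delicate use of \eqref{1L}'' and at grouping the products by $\mu'+\mu''$, without supplying a concrete mechanism, and the $\mu'+\mu''$ idea is not one the paper uses for this sufficiency direction (it is reminiscent of the counterexample constructions, not the positive result). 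As written, the argument does not establish the lemma for $r>2$.

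The missing idea in the paper's proof is a level-set decomposition of the coefficients within each scale, coupled with a heavy/light column splitting. After normalizing the wavelets in $L^r$ so that $B=\big(\sum_\omega |b_\omega|^r\big)^{1/r}\lesssim \|\sigma\|_{L^r_s}2^{-s\lambda}$, one fixes $\tau\ge 0$, isolates the indices where $B2^{-\tau}<|b_\omega|\le B2^{-\tau+1}$, and splits this set according to whether the column through $\omega$ contains at least $K=2^{\tau r/2}$ such indices. For the ``heavy column'' part one sums row-by-row and uses the $\ell^r$ bound to control the number $\gamma$ of heavy rows by $2^{\tau r}/K$; for the ``light column'' part one sums column-by-column and uses the cap $K$ on the column size together with orthogonality in the $\eta$-variable. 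The threshold $K=2^{\tau r/2}$ is chosen precisely so that the two resulting bounds coincide, yielding the single estimate $\lesssim 2^{(r/4-1)\tau}2^{\lambda(2n/r-s)}$. Summing in $\tau$ is then geometric for $r<4$, giving $s>2n/r$, while for $r\ge 4$ the sum is dominated by $\tau_{\max}\approx 2n\lambda/r$ and produces $\lambda\, 2^{\lambda(n/2-s)}$, giving $s>n/2$. Without this combinatorial device --- or an equivalent replacement --- the sharp exponent $\max(n/2,2n/r)$ is not reached, so you should regard the hard case as genuinely open in your write-up rather than merely ``technical.''
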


\begin{proof}
%First, we note that there is nothing to be gained from the square function used in the definition of the Sobolev space. Due to the embeddings of Sobolev and other smoothness spaces, we could have replaced the Sobolev norm by any other Besov or Triebel-Lizorkin norm with the same $p$ and $s$ and still get the same result. 
The important inequality is the one for a single generation of   wavelets (with $\lambda$ fixed). 
For   a fixed $\la$, by the uniform compact supports of the elements in 
the basis, we can classify the wavelets into finitely many subclasses such that the supports
of the elements in each subclass are pairwise disjoint. We denote by
 $D_{\lambda,\kappa}$ such a subclass and   the related symbol 
$$
\si_{\lambda,\kappa}=\sum_{\omega\in D_{\lambda,\kappa}}a_\omega \omega,
$$
where $a_\omega=\langle \si, \omega \rangle .$ The $\om$'s are $L^2$ normalized, but we change the normalization to $L^r,$ i.e. we consider $\tilde \omega = \omega/\|\omega\|_{L^r}$ and $b_\omega=a_\omega\|\omega\|_{L^r}.$ We have      
$$
\si_{\lambda,\kappa}=\sum_{\omega\in D_{\lambda,\kappa}}b_\omega \tilde \omega
$$
and from the Sobolev smoothness and the fact that the supports of the wavelets do not overlap, with the aid of   \eqref{1L} we obtain  
\begin{align*}
B   =\bigg( \sum_{\omega\in D_{\lambda,\kappa}} |b_\omega|^r\bigg)^{1/r}  
& = \bigg(\sum_{\om}\int \Big(|a_\om\om|^2\Big)^{r/2}dx\bigg)^{1/r} \\
& \le \Big\|\Big(\sum_\om |a_\om\om|^2\Big)^{1/2}\Big\|_{L^r}\\
&  \leq C \|\si\|_{L^r_s} 2^{-s\lambda}.
\end{align*} 
Now, each  $\omega$ in $D_{\la,\kappa}$ is of the form $\omega=\omega_k\omega_l$ 
with $\vu=(k,l)$, where $k$ and $l$ both range over  index sets $U_1$ and $U_2$ of cardinality at most $C2^{\lambda n}.$ Moreover we denote by $b_{kl}$ the coefficient $b_\om$, and
we have
$$ \si_{\lambda,\kappa}=\sum_{k\in U_1}\tilde \omega_k\sum_{l\in U_2} b_{k l} \tilde \omega_l.$$

Set $\tau_{\rm max}$ to be  the positive number such that $2n\lambda/r\le 
\tau_{\max}< 1+2n\lambda/r$.
For a nonnegative number $  \tau<2n\lambda/r=\tau_{\rm max}$  and a positive constant (depending on $\tau$) $K=2^{\tau r/2}$ we introduce the following decomposition: We define
the level set according to $b$ as
$$
D_{\lambda,\kappa}^{\tau}=\{\omega\in D_{\lambda,\kappa}: B2^{-\tau}<|b_\omega|\leq  B2^{-\tau+1}\},
$$
when $\tau<\tau_{\max}$.
%Let us observe that for the value $\tau_{\rm max}=2n\lambda/r$ the level set may contain all the wavelets from the group. Therefore, 
We  also define the  set 
$$
D_{\lambda,\kappa}^{\tau_{\rm max}}= \{\omega\in D_{\lambda,\kappa}: |b_\omega|\leq  B2^{-\tau_{\rm max}+1}\}.  
$$
We now take the part with heavy columns
$$
D_{\lambda,\kappa}^{\tau,1}=\{\omega_k\omega_l\in 
D_{\lambda,\kappa}^{\tau}: 
{\rm card}\{s:\omega_k\omega_s\in D_{\lambda,\kappa}^{\tau}\}\geq K\},
$$
and the remainder  
$$
D_{\lambda,\kappa}^{\tau,2} = D_{\lambda,\kappa}^{\tau}\setminus D_{\lambda,\kappa}^{\tau,1}.
$$
We also use the following notations for the index sets: $U_1^{\tau,1}$  is the set of $k$'s
such that $\om_k\om_l$ in $D_{\lambda,\kappa}^{\tau,1}$, and for each $k\in U_1^{\tau,1}$ 
we denote $U_{2,k}^{\tau,1}$ the set of corresponding second indices
$l$'s such that $\om_k\om_l \in D_{\lambda,\kappa}^{\tau,1}$, whose cardinality is 
at least $K$. 
We also denote $$ \si_{\lambda,\kappa}^{\tau,1}=\sum_{k\in U_1^{\tau,1}}\tilde \omega_k\sum_{l\in U_{2,k}^{\tau,1}} b_{kl} \tilde \omega_l,$$ thus summing over the wavelets in the set $D_{\lambda,\kappa}^{\tau,1}.$
The symbol  $\si_{\lambda,\kappa}^{\tau,2}$ is then defined by summation over $D_{\lambda,\kappa}^{\tau,2}.$

We first treat the part $\si_{\lambda,\kappa}^{\tau,1}.$ 
% The  $L^{\infty}$ norm of $\sum_{l\in U_2^{\tau,1}} b_{\omega_k \omega_l} \tilde \omega_l$ is at most
%$C2^{-\tau}B2^{\lambda n/p}\leq C 2^{-\tau +\lambda(n/p-s)}.$ 
Denote $\gamma={\rm card  }\  U_1^{\tau,1}$. For $\tau<\tau_{\rm max}$ the 
$\ell^r$-norm of the part of the sequence $\{b_{kl}\}$ indexed by the set $D_{\lambda,\kappa}^{\tau,1}$ 
is comparable to 
$$
C \bigg(\sum_{k\in U_1^{\tau,1}}\sum_{l\in U_{2,k}^{\tau,1}} (B2^{-\tau})^r\bigg)^{1/r}
$$
which is at least as big as 
$C(\gamma K (B 2^{-\tau})^r)^{1/r}$. However this $\ell^r$-norm is smaller than $B$, therefore we get $\gamma\leq C2^{\tau r}/ K= C2^{\tau r/2}.$ For  $\tau=\tau_{\rm max}$ we  trivially have that
 $\gamma\le C2^{n\lambda}= C2^{\tau_{\max} r/2}.$

For  $f,g \in \mathcal S$ we estimate the multiplier norm of $\si_{\lambda,\kappa}^{\tau,1}$ as follows:
$$ \begin{aligned}
\|{\mathcal F}^{-1}(\si_{\lambda,\kappa}^{\tau,1}\widehat f \widehat g\,)\|_{L^1}&\leq \sum_{k\in U_1^{\tau,1}}\| \widehat f \tilde \omega_k\|_{L^2}\| \sum_{l\in U_{2,k}^{\tau,1}}  b_{kl} \tilde \omega_l \widehat g\,\|_{L^2} \\&\leq  C \sum_{k\in U_1^{\tau,1}}\| \widehat f \tilde \omega_k\|_{L^2} \sup_{l}|b_{kl}|2^{\lambda n/r}\|g\|_{L^2} \\
&\leq C2^{\lambda n/r}\|g\|_{L^2}
\bigg(\sum_k\sup_{l}|b_{kl}|^2\bigg)^{1/2} \bigg(\sum_{k}\| \widehat f \tilde\omega_k\|_{L^2}^2 \bigg)^{1/2}.
\end{aligned}
$$
In view of orthogonality and of the fact that $\|\tilde \omega_k\|_{L^{\nf}}\approx 2^{\lambda n/r}$ we obtain 
the inequality
$$ 
\bigg(\sum_{k}\| \widehat f \tilde\omega_k\|_{L^2}^2 \bigg)^{1/2} \leq C 2^{\f {\lambda n}r}\|f\|_{L^2}.
$$
By the definition of $U_1^{\tau,1}$ we have also that
$$
\bigg(\sum_k\sup_{l}|b_{kl}|^2\bigg)^{1/2}  \leq   B 2^{-\tau}\gamma^{\f 12} .
$$
Collecting these estimates, we deduce 
\begin{equation}\label{rows}
\|{\mathcal F}^{-1}(\si_{\lambda,\kappa}^{\tau,1}\widehat f \,\,\widehat g\,)\|_{L^1}\leq 
C\|\si\|_{L^r_s}\gamma^{\f 1 2}2^{\la( \f{2n}r-s)} 2^{-\tau}\|f\|_{L^2}\|g\|_{L^2} .
\end{equation}

The  set $D_{\lambda,\kappa}^{\tau,2}$ has   the property that in each column there are at most $K$  elements. Let us denote by   $V^2$ the index set of all second indices 
such that $\tilde\om_k\tilde\om_l\in
D_{\lambda,\kappa}^{\tau,2}$, and for each $l\in V^2$ set 
$V^{1,l}$ the corresponding sets of first indices.  Thus $$D_{\lambda,\kappa}^{\tau,2}=\{\omega_k\omega_l: l\in V^2, k\in V^{1,l}\}.$$ 
We then have 
$$ \begin{aligned}
\|{\mathcal F}^{-1}(\si_{\lambda,\kappa}^{\tau,2} \widehat f \,\,\widehat g\, )\|_{L^1}&\leq \sum_{l\in V^2}   \big\|\sum_{k\in V^{1,l}}b_{kl} \tilde \omega_k\widehat f\, \big\|_{L^2}
 \|\tilde \omega_l\widehat g\, \|_{L^2}
\\& \leq   \bigg( \sum_{l\in V^2_\sigma}  
 \big\|\sum_{k\in V^{1,l}} b_{kl}\tilde \omega_k\widehat f\, \big\|_{L^2}^2 \bigg)^{1/2} \bigg(\sum_{l\in V^2}  \|\tilde \omega_l\widehat g\, \|_{L^2}^2\bigg)^{1/2}.
\end{aligned}
$$
We need to estimate 
$$
\begin{aligned}
 \sum_{l\in V^2}  
 \Big\|\sum_{k\in V^{1,l}} b_{kl}\tilde \omega_k\widehat f\, \Big\|_{L^2}^2
&\leq C \int_{Q} \sum_{l\in V^2}   \sum_{k\in V^{1,l}} B^22^{-2\tau} |\tilde \omega_k|^2 |\widehat f(\xi_1)|^2 d\xi_1\\& \leq C K 2^{\f {2n\lambda}r} B^{2}2^{-2\tau}\|f\|_{L^2}^2,
\end{aligned}
$$
since, by the disjointness of the supports of $\tilde \om_k$,
$\sum_k|\tilde\om_k|^2\le C2^{2n\la/r}$,
and %the fact that each wavelet may repeat at most $K$ times.
the cardinality of $V^2$ is controlled by $K$.

Returning to our estimate, and using orthogonality, we obtain 
\begin{equation}
\label{columns}
 \|{\mathcal F}^{-1}(\si_{\lambda,\kappa}^{\tau,2}\widehat f \,\,\widehat g\,)\|_{L^1}\leq C 
 \|\si\|_{L^r_s}K^{\f 12}  2^{-s\lambda} 2^{-\tau} 2^{ \f{2\lambda n}r} \|f\|_{L^2}\|g\|_{L^2}.
\end{equation}

For any $\tau\le\tau_{\rm max}$ the two inequalities~\eqref{rows} and~\eqref{columns}
are the same due to   $\gamma\leq C2^{\tau r}/K= C2^{\tau r/2}.$ Therefore, we have 
\begin{equation}\label{IMP}
 \|{\mathcal F}^{-1}(\si_{\lambda,\kappa}^{\tau}\widehat f \,\,\widehat g\,)\|_{L^1}\leq C \|\si\|_{L^r_s}2^{(\f r4-1)\tau}2^{\lambda(\f{ 2n} r -s)}   \|f\|_{L^2}\|g\|_{L^2}.
\end{equation}
\begin{comment} For $\tau=\tau_{\rm max}$ we put $K=2^{n\lambda},$ which makes the set   $D_{\lambda,\kappa}^{\tau,2}$ empty.
Thus {\color{red} for $\tau_{\rm max}$}  we do not consider \eqref{columns} and in~\eqref{rows}   the value 
$\gamma=2^{n\lambda} $
yields 
$$
 \|{\mathcal F}^{-1}(\si_{\lambda,\kappa}^{\tau_{\rm max}}\widehat f \widehat g)\|_{L^1}\leq C
 \|\si\|_{L^r_s} 2^{ \f {n\lambda}{2} - \lambda  s }   \|f\|_{L^2}\|g\|_{L^2}, 
$$
{\color{red} which is summable in $\lambda$ when $s>n/2$. 
\end{comment}
The right hand side 
has a negative exponent in $\lambda$ since $s>2n/r.$ 

The behavior  in $\tau$ depends on $r$. 
For $1<r<4$ it is a geometric series in $\tau$ and 
hence summing over $0\le\tau\le\tau_{\max}$
and $\la\ge0$  is finite.
%the exponent in $\lambda$ is also negative if  $s>2n/r.$  
However, if $r\geq 4,$ we need to use the following observation:
$$
\sum_{\tau=0}^{\tau_{\rm max} }2^{(\f r4-1)\tau}\leq  
C \tau_{\rm max}2^{(\f r 4 -1)\tau_{\rm max}} 
\le C   \big(\tfrac{ 2n\lambda}r\big)2^{(\f r4-1) \f{2n\lambda}r}.
$$
Therefore, by summing over $\tau$ in \eqref{IMP}  we obtain
$$
\sum_{\tau=0}^{\tau_{\rm max}} \|{\mathcal F}^{-1}(\si_{\lambda,\kappa}^{\tau}\widehat f \,\,\widehat g\,)\|_{L^1}\leq C \|\si\|_{L^r_s} \big(\tfrac{ 2n\lambda}r\big)2^{(\f r4-1) (1+\f{2n\lambda}r)} 2^{\lambda(\f{ 2n} r -s)}   \|f\|_{L^2}\|g\|_{L^2}.
$$
Since $ (2n\lambda/r)2^{(r/4-1)2n\lambda/r} 2^{\lambda(2n/r -s)}= (2n\lambda/r) 2^{\lambda(n/2-s)},$ these estimates form a summable series in $\la$ only if $s>n/2.$

We have $1\leq \kappa\leq C_{n}$ and $\si=\sum_{\lambda=0}^{\infty} \sum_\kappa \si_{\lambda,\kappa}.$ Therefore for $s$ and $r$ related as in $s>\max(2n/r, n/2)$ we have   convergent series, and we obtain the result by summation in $\tau$ first and then in $\la$. 
\end{proof}

\begin{rmk}
We see from the proof (or by an easy dilation argument) that the condition $Q$ is $[-2,2]^n$ 
is not essential and the statement keeps valid when $Q$ is any fixed compact set.
\end{rmk}

\section{The proof of Theorem \ref{MR2}}

\begin{proof}
We use an idea developed in \cite{GHH}, where we consider off-diagonal and  diagonal cases 
separately. For the former we  use the Hardy-Littlewood maximal function and a ``square'' function, and 
for the latter we use use Lemma \ref{MR} in Section~\ref{MLS}.

We introduce   notation needed to study these cases appropriately. We define $\si_j(\xi,\eta)=
\si(\xi,\eta)\wh\psi(2^{-j}(\xi,\eta))$ and write $m_j(\xi,\eta)=\si_j(2^j(\xi,\eta))$. We note
that all $m_j$ are supported in the unit annulus, the dyadic annulus centered at zero with radius 
comparable to $1$, and $\|m_j\|_{L^r_s}\le A$ uniformly in $j$ by assumption \eqref{usb}.

By the discussion in the previous section, for each $m_j$ we have the decomposition
$m_j(\xi,\eta)=\sum_{\kappa}\sum_{\la}\sum_{k,l}b_{k,l}\tilde\om_k(\xi)\tilde \om_l(\eta)=
\sum_{\la}m_{j,\la}$
with $\tilde \om_k\approx 2^{\la n/r}$
and $(\sum_{k,l}|b_{k,l}|^r)^{1/r}\le CA2^{-\la s}$.
Assume that both $\Psi_F$ and $\Psi_M$ are supported in $B(0,N)$ for some large fixed number $N$. We define the off-diagonal parts 
$$
m_{j,\la}^2(\xi,\eta)=\sum_{\kappa}\sum_k\sum_{|l|\le 2\sqrt nN}b_{k,l}\tilde \om_k(\xi)
\tilde \om_l(\eta)
$$
and
$$
m_{j,\la}^3(\xi,\eta)=\sum_{\kappa}\sum_l\sum_{|k|\le 2\sqrt nN}b_{k,l}\tilde \om_k(\xi)
\tilde \om_l(\eta),
$$
then the remainder in the $\la$ level is 
$m_{j,\la}^1(\xi,\eta)=[m_{j,\la}-m_{j,\la}^2-m_{j,\la}^3](\xi,\eta)$ with each wavelet involved  away from the axes.
Moreover for $i=1,2,3$,
we  define $m_j^i=\sum_{\la}m_{j,\la}^i$, $\si_j^i=m_j^i(2^{-j}\cdot)$, $\si^i=\sum_j\si_j^i$.
Notice that $\si$ is equal to the sum $\si^1+\si^2+\si^3$.

\vspace{4pt}
\noindent {\em (i) The Off-diagonal Cases}

We consider the off-diagonal cases $m_{j,\la}^2$ and $m_{j,\la}^3$ first. By symmetry, it suffices
to consider
$$
T_{m_{j,\la}^2}(f,g)(x)=
\int_{\bbr^{2n}}m_{j,\la}^2(\xi,\eta)\wh f(\xi)\wh g(\eta)e^{2\pi i x(\xi+\eta)}d\xi d\eta .
$$
By the definition   $\tilde \om_l=2^{\la n/2}\Psi(2^{\la}x-l)/\|\om_l\|_{L^r}$,
we have
$|(\tilde \om_l\wh g)^{\vee}(x)|\le C2^{\la n/r}M(g)(x)$, where
$M(g)(x)$ is the Hardy-Littlewood maximal function.
Recall the boundedness of $b_{k,l}$ and $\tilde \om_{k}$, we therefore
have
$$
|(\sum_{k}b_{k,l}\tilde \om_k\wh f\, )^{\vee}|
\le 2^{\la(n/r-s)} |(m\wh f\chi_{1/2\le|\xi|\le2})^{\vee}|
$$
with $\|m\|_{L^{\nf}}\le C$. In view of the finiteness of
$N$ and the number of $\kappa$'s, we finally obtain a pointwise control
$$
|T_{m_{j,\la}^2}(f,g)(x)|\le C2^{(2n/r-s)\la} |T_m( f')(x)|M(g)(x),
$$
where $\wh{f'}=\wh f \chi_{1/2\le|\xi|\le2}$.

Observe that
$$
T_{\si_j^2}(f,g)(x)=2^{jn}T_{m_j^2}(f_j,g_j)(2^jx)
$$
with $\wh f_j(\xi)=2^{jn/2}f(2^j\xi)\chi_{1/2\le|\xi|\le2}$
and $\wh g_j(\xi)=2^{jn/2}\wh g(2^j\xi)$. Note that 
we did not define $f_j$ and $g_j$ in  similar ways. By a standard  argument  
using the square function characterization of the Hardy space $H^1$, we control
$\|T_{\si^2}(f,g)\|_{L^1}$
by
\begin{align*}
\Big\|\Big(\sum_j|T_{\si_j^2}(f,g)|^2\Big)^{1/2}\Big\|_{L^1}
= & \Big\|\Big(\sum_j|
2^{jn}T_{m_j^2}(f_j,g_j)(2^j\cdot)|^2\Big)^{1/2}\Big\|_{L^1} \\
\le & \sum_{\la}2^{(2n/r-s)\la}\|g\|_{L^2}\Big(\int \sum_j|\wh f_j(\xi)|^2d\xi\Big)^{1/2}\, .
\end{align*}
Because of the definition of $\wh f_j$, we see that
$$
\int \sum_j|\wh f_j(\xi)|^2d\xi
=\int \sum_j|\wh f(\xi)|^2\chi_{2^{j-1}\le|\xi|\le2^{j+1}}d\xi\le C\|f\|_{L^2}^2.
$$
The exponential decay in $\la$ given by the condition $rs>2n$
then concludes the proof of the off-diagonal cases.

\vspace{4pt}
\noindent {\em (ii) The Diagonal Case}

This case is relatively simple by an argument similar to the 
diagonal part in \cite{GHH},
because we have dealt with the key ingredient in Lemma
\ref{MR}. We give a brief proof here for completeness.
By dilation
we have that
$$
\|T_{\si^1}(f,g)(x)\|_{L^1} \le 
\|\sum_j\sum_{\la}T_{\si_{j,\la}^1}(f,g)\|_{L^1} 
\le  \sum_{\la}\sum_j\|2^{jn}T_{m_{j,\la}^1}(f_j,g_j)(2^j\cdot)\|_{L^1},
$$
where $\wh f_j(\xi)=2^{jn/2}\wh f(2^{jn}\xi)\chi_{C2^{-\la}\le |\xi|\le 2}(\xi)$
because in the support of $m_{j,\la}^1$ we have $C2^{-\la}\le |\xi|\le 2$, 
and $g_j$ is defined similarly. For the last line we apply
Lemma \ref{MR} and obtain,   when $  r\ge4$,
the estimate
$$
\sum_{\la}C \tf{2n\la}r 2^{\la(n/2-s)}\sum_j\|\wh f_j\|_{L^2}\|\wh g_j\|_{L^2}\le
\sum_{\la}C \tf{2n\la}r 2^{\la(n/2-s)}\Big(\sum_j\|\wh f_j\|^2_{L^2}\Big)^{1/2}\Big(\sum_j\|\wh g_j\|^2_{L^2}\Big)^{1/2} .
$$
%{\color{red} add the case $r< 4$}
And when $r< 4$, we have a similar control
$$
\sum_{\la}C 2^{\la(2n/r-s)}\sum_j\|\wh f_j\|_{L^2}\|\wh g_j\|_{L^2}\le
\sum_{\la}C  2^{\la(2n/r-s)}\Big(\sum_j\|\wh f_j\|^2_{L^2}\Big)^{1/2}\Big(\sum_j\|\wh g_j\|^2_{L^2}\Big)^{1/2} .
$$
Observe that
$$
\sum_j\|\wh f_j\|^2_{L^2}=\int|\wh f(\xi)|^2\sum_j\chi_{2^{-\la-j}\le|\xi|\le 2^{1-j}}(\xi)d\xi
\le C\la\|f\|_{L^2}^2,
$$
so  in either case with the restriction $s>\max\{n/2, 2n/r\}$
the  sum over $\la$
is controlled by $\|f\|_{L^2}\|g\|_{L^2}$. Thus we conclude the proof of the diagonal
case and   of Theorem \ref{MR2}.
\end{proof}

\section{Necessary Conditions}
 For a bounded function $\sigma$, let $T_\sigma$ be the $m$-linear multiplier operator with symbol $\si$. 
In this section we obtain examples for $m$-linear multiplier operators that impose restrictions on the indices and the smoothness in order to 
have 
\begin{equation} \label{B33}  
\|T_\si\|_{L^{p_1}(\bbr^n)\times\cdots\times L^{p_m}(\bbr^n)\to L^p(\bbr^n)}\le C\sup_{j\in\mathbb Z}\|\si(2^j\cdot)\wh\Psi\|_{L^r_s(\bbr^{mn})}. 
\end{equation}
These   conditions show in particular that the restriction on $s$ in Theorem \ref{MR2} is necessary.

%We first show that if \eqref{B33} holds, then we must have $$s\ge \max \Big(\f{m-1}2 , \f mr \Big)n\, .$$
We first prove Theorem~\ref{MR3} via two counterexamples; these are contained in Proposition \ref{Sq}
and Proposition~\ref{SIS}, respectively.

\begin{prop}\label{Sq}%Smoothness related to the integral index $q$.
Under the hypothesis of Theorem~\ref{MR3} 
  we must   have $s\ge (m-1)n/2$.
\end{prop}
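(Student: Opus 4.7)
The strategy is to exhibit a family of counterexamples parameterized by a small $\epsilon>0$ and then pass to the limit $\epsilon\to 0$ in the hypothesized inequality \eqref{B1}. Fix $a_1,\ldots,a_m\in\bbr^n$ with $|a_1+\cdots+a_m|\approx 1$, and let $\phi$ be a Schwartz function with $\wh{\phi}$ a small compactly supported bump satisfying $\wh{\phi}(0)=1$. Define
\[
\sigma^\epsilon(\xi_1,\ldots,\xi_m)=\prod_{i=1}^{m-1}\wh{\phi}\Big(\tfrac{\xi_i-a_i}{\epsilon}\Big)\,\chi(\xi_m-a_m),\qquad \wh{f_i^\epsilon}(\xi)=\epsilon^{n/p_i-n}\wh{\phi}\Big(\tfrac{\xi-a_i}{\epsilon}\Big),
\]
where $\chi$ is a smooth unit-scale cutoff equal to $1$ on the support of $\wh{f_m^\epsilon}$; by construction $\|f_i^\epsilon\|_{L^{p_i}}\approx 1$.

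Standard Bessel-potential scaling for a bump concentrated at scale $\epsilon$ in $(m-1)n$ coordinates gives $\sup_j\|\sigma^\epsilon(2^j\cdot)\wh{\Psi}\|_{L^r_s(\bbr^{mn})}\ls\epsilon^{(m-1)n/r-s}$. Using the product structure of $\sigma^\epsilon$ and the identity $[\wh{\phi}((\xi-a)/\epsilon)^2]^\vee(x)=\epsilon^n e^{2\pi i a\cdot x}(\phi\ast\phi)(\epsilon x)$, a direct computation yields $T_{\sigma^\epsilon}(f_1^\epsilon,\ldots,f_m^\epsilon)(x)=\epsilon^{n/p}e^{2\pi i(a_1+\cdots+a_m)\cdot x}H(\epsilon x)$ with $H=(\phi\ast\phi)^{m-1}\phi$ a fixed Schwartz function, so a rescaling of the $L^p$ norm produces $\|T_{\sigma^\epsilon}(f_1^\epsilon,\ldots,f_m^\epsilon)\|_{L^p}\approx 1$. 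Feeding both estimates into \eqref{B1} and letting $\epsilon\to 0$ forces $s\ge(m-1)n/r$, which already yields the desired $s\ge(m-1)n/2$ whenever $r\le 2$.

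In the remaining range $r>2$ the bound $s\ge(m-1)n/r$ is strictly weaker than $(m-1)n/2$, so the deterministic example must be refined. The plan is to replace $\sigma^\epsilon$ by a Khinchin-randomized sum $\sigma^\epsilon_{\vec\eta}=N^{-1/2}\sum_k\eta_k\psi_k$ of $N\approx\epsilon^{-(m-1)n}$ essentially disjointly supported translates $\psi_k$ of $\sigma^\epsilon$ within the unit annulus, with independent random signs $\eta_k\in\{\pm 1\}$. Pointwise disjointness of supports together with Khinchin's inequality then collapses $\sup_j\|\sigma^\epsilon_{\vec\eta}(2^j\cdot)\wh{\Psi}\|_{L^r_s}$ to $\approx\epsilon^{(m-1)n/2-s}$, uniformly in $r$. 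A matching lower bound $\|T_{\sigma^\epsilon_{\vec\eta}}(f_1,\ldots,f_m)\|_{L^p}\gs 1$ for some realization of the signs is obtained by an averaging argument on the output side using test functions whose Fourier transforms are spread out enough to interact with all the $\psi_k$'s constructively; letting $\epsilon\to 0$ then forces $s\ge(m-1)n/2$ in this range as well. The principal obstacle is verifying this operator lower bound: one must design the test functions so that the individual outputs $T_{\psi_k}(f_1,\ldots,f_m)$ do not cancel after the random sum and so that applying Khinchin's inequality on the $L^p$ side produces a factor $\sqrt N$ exactly canceling the $N^{-1/2}$ prefactor of $\sigma^\epsilon_{\vec\eta}$.
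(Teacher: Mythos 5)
Your Part 1 (the deterministic bump concentrated at scale~$\epsilon$ in the first $(m-1)n$ coordinates) is a correct construction, but it only yields $s\ge (m-1)n/r$, which is the claimed bound only when $r=2$ and is strictly weaker when $r>2$. You recognize this, and your Part 2 is a plan, not a proof; the "principal obstacle" you flag there is in fact the whole point, and as sketched it does not go through. If the $\psi_k$'s are pairwise-disjointly supported translates in the unit annulus, then for a \emph{fixed} tuple of concentrated test functions $f_i^\epsilon$ each bilinear piece $T_{\psi_k}(f_1^\epsilon,\ldots,f_m^\epsilon)$ vanishes except for one $k$, so there is nothing to average; while if instead you spread $\wh{f_i}$ over the whole annulus so as to interact with all $\psi_k$'s, the $L^{p_i}$ norms of the $f_i$ no longer remain $\approx 1$ and must themselves be estimated. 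You cannot do both at once with deterministic test functions, and the "averaging argument on the output side" is never specified. Nothing in the proposal as written lets you conclude $\|T_{\sigma^\epsilon_{\vec\eta}}(f_1,\ldots,f_m)\|_{L^p}\gtrsim 1$ while keeping $\prod_i\|f_i\|_{L^{p_i}}\lesssim 1$.

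The paper's proof resolves exactly this tension by \emph{jointly} randomizing the multiplier and the test functions. In dimension $n=1$, it sets $\wh{f_k}(\xi)=\sum_{j=1}^{N} a_j(t_k)\,\wh\vp(N\xi-j)$ and builds the multiplier
\[
\si_N=\sum_{j_1,\ldots,j_m} a_{j_1}(t_1)\cdots a_{j_m}(t_m)\,a_{j_1+\cdots+j_m}(t_{m+1})\,c_{j_1+\cdots+j_m}\,\prod_k\phi(N\xi_k-j_k),
\]
so that after multiplying $\si_N$ against $\prod_k \wh{f_k}$ the signs $a_{j_k}(t_k)$ square to one and cancel, leaving a single fresh Rademacher sign $a_{l}(t_{m+1})$ per output frequency $l=j_1+\cdots+j_m$. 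This is the mechanism that makes Khinchin's inequality compute all four quantities exactly: $\|f_k\|_{L^{p_k}}\approx N^{1/p_k-1/2}$, $\|T_{\si_N}(f_1,\ldots,f_m)\|_{L^p}\approx N^{1/p-1/2}$, and (crucially, uniformly in the $t$'s) $\|\si_N\|_{L^r_s}\lesssim N^s$. Plugging into \eqref{B1} and using $1/p=\sum 1/p_k$ makes all the $p$-dependence drop out, leaving $N^{(m-1)/2}\lesssim N^s$, i.e.\ $s\ge (m-1)/2$; tensoring in $n$ coordinates gives $s\ge (m-1)n/2$. To fix your proposal you would need to realize that the test functions themselves must carry Rademacher signs indexed by the same lattice as the multiplier pieces, and that the multiplier signs must be chosen to cancel against them---this is precisely what your vague "design the test functions so the individual outputs do not cancel" is pointing at but does not deliver.
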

\begin{proof}
We use the bilinear case with dimension one to demonstrate the idea first. Then we easily
extend the argument to  higher dimensions.

We fix a Schwartz function $\vp$ with $\hat \vp$ supported in $[-1/100,1/100]$. Let
$\{a_j(t)\}_{j }$ be a sequence of Rademacher functions indexed by positive integers, and
  for $N>1$ define 
$$
\wh{ f_N}(\xi_1)=\sum_{j=1}^N a_j(t_1) \hat \vp (N \xi_1 -j)\, , 
\qq
\wh{ g_N}(\xi_2)=\sum_{k=1}^N a_k(t_2) \hat \vp (N \xi_2 -k).
$$
Let $\phi$ be  a smooth function $\phi$ supported in $[-\tf1{10},\tf1{10}]$ assuming value 
$1$ in $[-\tf1{20},\tf1{20}]$.
We construct the multiplier $\si_N$ of the bilinear operator $T_N$ as follows,
\begin{equation}\label{si_N}
\si_N=\sum_{j=1}^{N}\sum_{k=1}^N a_j(t_1)a_k(t_2) a_{j+k}(t_3)c_{j+k}\phi(N \xi_1-j)\phi(N \xi_2-k),
\end{equation}
where $c_l=1$ when $9N/10\le l\le 11N/10$ and $0$ elsewhere.
Hence 
\begin{align*}
T_N(f_N,g_N)(x)=&\sum_{j=1}^{N}\sum_{k=1}^N a_{j+k}(t_3)c_{j+k}\tf1 {N^2}
\vp(x/N) \vp(x/N)e^{2\pi ix(j+k)/N}\\
=&\sum_{l=2}^{2N}\sum_{k=s_l}^{S_l}a_l(t_3)c_{l}\tf1 {N^2}
\vp(x/N) \vp(x/N)e^{2\pi ixl/N},
\end{align*}
where $s_l=\max(1,l-N)$ and $S_l=\min(N,l-1)$.
We   estimate $\|f_N\|_{L^{p_1}(\bbr)}$, $\|g_N\|_{L^{p_2}(\bbr)}$,
$\|\si_N\|_{L^{r}_s(\bbr^2)}$ and $\|T_N(f_N,g_N)\|_{L^{p}(\bbr)}$.

First we prove that 
$\|f_N\|_{L^{p_1}(\bbr)}\approx N^{1-\tf{p_1}2}$. By Khinchine's inequality we have
\begin{align*}
\int_0^1\|f_N\|_{L^{p_1}}^{p_1}dt_1=&
\int_{\bbr}\int_0^1\big|\sum_{j=1}^Na_j(t_1)\f{\vp(x/N)}Ne^{2\pi ixj/N}\big|^{p_1}dt_1dx\\
\approx&\int_{\bbr}\Big(\sum_{j=1}^N \Big| \f{\vp(x/N)}N  \Big|^2\Big)^{p_1/2}dx\\
\approx&\,\,N^{-p_1/2}\int_{\bbr} \big|\vp(x/N)\big|^{p_1}dx\\
\approx&\,\, N^{1-\tf{p_1}2}.
\end{align*}
Hence
$\|f_N\|_{L^{p_1}(\bbr\times[0,1],\ dxdt)}\approx N^{\tf1{p_1}-\tf12}$.
Similarly 
$\|g_N\|_{L^{p_1}(\bbr\times[0,1],\ dxdt)}\approx N^{\tf1{p_2}-\tf12}$.
The same idea gives that
\begin{align*}
\int_0^1\|T_N(f_N,g_N)\|_{L^{p}}^{p}dt_3\approx&
\int_{\bbr}\Big(\sum_{l=2}^{2N}
\big|c_l(S_l-s_l)\tf1 {N^2}
\vp^2(x/N)e^{2\pi ixl/N}\big|^2\Big)^{p/2}dx\\
\approx&
\int_{\bbr} \Big(\sum_{l=9N/10}^{11N/10}(S_l-s_l)^2\Big)^{p/2}
\tf1 {N^{2p}}
|\vp(x/N)|^{2p}dx\\
\approx&\,\,N^{\tf{3p}2-2p}\int_{\bbr}|\vp(x/N)|^{2p}dx\\
\approx&\,\, N^{1-\tf{p}2}.
\end{align*}
In other words we showed that 
$\|T_N(f_N,g_N)\|_{L^{p}(\bbr\times[0,1],\ dxdt)}\approx N^{\tf1{p}-\tf12}$.

As for   $\si_N$, we have the following result whose proof can be found in \cite[Lemma 4.2]{P1}. 

\begin{lm}\label{SN} %Sobolev Norm
For the multiplier $\si_N$ defined in \eqref{si_N} and any $s\in(0,1)$, there exists a constant $C_s$ such that 
\begin{equation}
\|\si_N\|_{L^r_s(\bbr^2)}\le C_sN^s.
\end{equation}
\end{lm}

Apply \eqref{B1} to $f_N,\ g_N$ and $T_N$ defined above and integrate with respect to 
$t_1$, $t_2$ and $t_3$ on both sides, we have
$$
 \bigg(\int_0^1\int_0^1\int_0^1\|T_N(f_N,g_N)\|_{L^p}^pdt_3dt_1dt_2\bigg)^{1/p}\le   
  C_sN^s %\int_0^1\|\si_N\|_{L^r_s}^pdt_3
\bigg(\int_0^1\|f_N\|^p_{L^{p_1}}dt_1\int_0^1\|g_N\|_{L^{p_2}}^pdt_2\bigg)^{1/p},
$$
which combining the estimates obtained on $f_N$, $g_N$ and $T_N(f_N,g_N)$ above
implies
$$
N^{\tf1{p}-\tf12}\le C_s N^s
N^{\tf1{p_1}-\tf12}N^{\tf1{p_2}-\tf12},
$$
so we automatically have $N^{1/2}\le C_s N^s$, which
is true when $N$ goes to $\nf$ only if $s\ge 1/2$.

We now discuss the case $m\ge2$ and $n=1$. We use
for $1\le k\le m$ 
$$
\widehat {f_k}(\xi_k)=\sum_{j=1}^N a_j(t_k) \widehat \vp (N \xi_k -j),
$$
and
$$
\si_N=\sum_{j_1=1}^{N}\cdots\sum_{j_m=1}^N a_{j_1}(t_1)\cdots a_{j_m}(t_m) a_{j_1+\cdots+j_m}(t_{m+1})c_{j_1+\cdots+j_m}\prod_{k=1}^m\phi(N \xi_k-j_k).
$$
By an argument similar to the case $m=2$ and $n=1$, we have
$$
\|f_k\|_{L^{p_k}(\bbr\times[0,1],\ dxdt)}\approx N^{\tf1{p_k}-\tf12},
$$
$\|\si_N\|_{L^r_s}\le C N^s$
and
\begin{equation}\label{mo}%m-linear Operator
\|T(f_1,\dots,f_m)\|_{L^p(\bbr)}\approx N^{\f 1 p-\f 1 2},
\end{equation}
hence we obtain that $s\ge (m-1)/2$.

For the higher dimensional cases, we define
$$
F_k(x_1,\dots, x_n)=\prod_{\tau=1}^nf_k(x_\tau) ,
$$
%$G_N(x_1,\dots, x_n)=\prod_{\tau=1}^ng_N(x_\tau)$
and 
$\si(\xi_1,\dots, \xi_n)=\prod_{\tau=1}^n\si_N(\xi_\tau)$,
then $\|F_k\|_{L^{p_k}}\approx N^{n(\tf1{p_k}-\tf12)}$,
%$\|G_N\|\approx N^{n(\tf1{p_2}-\tf12)}$, 
$\|\si\|_{L^r_s}\le CN^s$,
and 
$$
\|T(F_1,\dots, F_m)\|\approx N^{n(\tf1{p}-\tf12)} . 
$$
We therefore obtain the 
restriction $s\ge (m-1)n/2$.
\end{proof}

\begin{prop}\label{SIS}  
Under the hypothesis of Theorem~\ref{MR3} 
  we must   have   $ s\ge mn/r$.
\end{prop}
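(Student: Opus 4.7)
The plan is to test inequality \eqref{B1} against a family of symbols $\si_\epsilon$ that concentrate in a ball of radius $\epsilon$ inside the annulus supporting $\wh\Psi$, and to let $\epsilon \to 0^+$. Let $\wh\varphi, \wh\phi$ be Schwartz functions with $\wh\varphi$ supported in a small ball around the origin and $\wh\phi \equiv 1$ on the support of $\wh\varphi$. Choose points $a_1,\dots,a_m \in \bbr^n$ so that $a=(a_1,\dots,a_m)$ lies strictly inside the annulus where $\wh\Psi$ is supported (e.g.\ take each $|a_k|=1/\sqrt m$, so $|a|=1$), and set
$$
\si_\epsilon(\xi_1,\dots,\xi_m) = \prod_{k=1}^m \wh\phi\big((\xi_k - a_k)/\epsilon\big).
$$
Next I would take test inputs $\wh{f_k}(\xi) = \epsilon^{n/p_k - n} \wh\varphi((\xi - a_k)/\epsilon)$, so that $f_k(x) = \epsilon^{n/p_k} e^{2\pi i a_k\cdot x}\varphi(\epsilon x)$ and $\|f_k\|_{L^{p_k}} \approx 1$ uniformly in $\epsilon$ (when $p_k\le 1$, using the equivalence of $L^{p_k}$ and $H^{p_k}$ norms for Schwartz functions with Fourier support in a fixed annulus, noted in the commented passage preceding Proposition \ref{Sq}). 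Since $\wh\phi((\xi_k-a_k)/\epsilon) \equiv 1$ on $\supp\wh{f_k}$, the product structure of $\si_\epsilon$ gives
$$
T_{\si_\epsilon}(f_1,\dots,f_m)(x) = \prod_{k=1}^m f_k(x) = \epsilon^{n/p}\,e^{2\pi i (a_1+\cdots+a_m)\cdot x}\,\varphi(\epsilon x)^m,
$$
so $\|T_{\si_\epsilon}(f_1,\dots,f_m)\|_{L^p} \approx 1$ independently of $\epsilon$.

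I would then estimate the Sobolev norm of $\si_\epsilon$ by scaling. Writing $\si_\epsilon(\xi)=\Phi((\xi-a)/\epsilon)$ with the fixed Schwartz function $\Phi(y)=\prod_k\wh\phi(y_k)$, the change of variables $\eta=(\xi-a)/\epsilon$ gives $\|\partial^\alpha \si_\epsilon\|_{L^r(\bbr^{mn})} = \epsilon^{mn/r-|\alpha|}\|\partial^\alpha \Phi\|_{L^r(\bbr^{mn})}$ for every multi-index $\alpha$. For $0<\epsilon<1$ and $s>0$, the $|\alpha|=s$ terms dominate in the equivalent characterization $\|\cdot\|_{L^r_s}\approx \sum_{|\alpha|\le s}\|\partial^\alpha\cdot\|_{L^r}$ (with the analogous fractional statement via the Bessel potential), yielding $\|\si_\epsilon\|_{L^r_s(\bbr^{mn})}\lesssim \epsilon^{mn/r-s}$. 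For $\epsilon$ sufficiently small, $\supp\si_\epsilon$ is contained well inside the annulus where $\wh\Psi$ lives, so $\si_\epsilon(2^j\cdot)\wh\Psi$ vanishes for all but finitely many (bounded independently of $\epsilon$) values of $j$; for those $j$, multiplication by the smooth bump $\wh\Psi$ and the bounded dilation $2^j$ are continuous on $L^r_s$, giving $\sup_{j\in\bbz}\|\si_\epsilon(2^j\cdot)\wh\Psi\|_{L^r_s}\lesssim \epsilon^{mn/r-s}$.

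Substituting the three estimates $\|T_{\si_\epsilon}(f_1,\dots,f_m)\|_{L^p}\approx 1$, $\|f_k\|_{L^{p_k}}\approx 1$, and $\sup_j\|\si_\epsilon(2^j\cdot)\wh\Psi\|_{L^r_s}\lesssim \epsilon^{mn/r-s}$ into \eqref{B1} yields $1 \lesssim \epsilon^{mn/r-s}$. Letting $\epsilon\to 0^+$ forces $mn/r-s\le 0$, i.e.\ $s\ge mn/r$, as required. The chief technical obstacle is the Sobolev-norm bound $\|\si_\epsilon\|_{L^r_s}\lesssim \epsilon^{mn/r-s}$ in the fractional-$s$ case; this can be handled either by a direct scaling argument for the Bessel potential or by interpolating between the clean integer-order estimates $\|\partial^\alpha\si_\epsilon\|_{L^r}=\epsilon^{mn/r-|\alpha|}\|\partial^\alpha\Phi\|_{L^r}$.
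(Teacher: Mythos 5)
Your proposal is correct and follows essentially the same strategy as the paper's own proof: the paper uses symbols $\si(\xi_1,\dots,\xi_m)=\prod_{j}\phi(N(\xi_j-a))$ concentrated at scale $1/N$ (your $\epsilon$) with test functions $\wh f_j(\xi_j)=\wh\vp(N(\xi_j-a))$, computes $\|f_j\|_{L^{p_j}}\approx N^{n/p_j-n}$, $\|T_\si(f_1,\dots,f_m)\|_{L^p}\approx N^{n/p-mn}$, and $\|\si\|_{L^r_s}\lesssim N^{s-mn/r}$, then lets $N\to\infty$. The only cosmetic differences are that you $L^{p_k}$-normalize the test functions, center each $a_k$ so that $|(a_1,\dots,a_m)|=1$ rather than using the same $a$ with $|a|=1$, and spell out the reduction of $\sup_j\|\si_\epsilon(2^j\cdot)\wh\Psi\|_{L^r_s}$ to $\|\si_\epsilon\|_{L^r_s}$ and the fractional-Sobolev scaling; none of this changes the substance.
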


\begin{comment}
\begin{lm}[MFA Proposition 7.3.6]
If 
$\|T_\si\|_{L^{p_1}(\bbr^n)\times\cdots\times L^{p_m}(\bbr^n)\to L^p(\bbr^n)}\le C$,
then $\si\in L^{\nf}$.
\end{lm}

\begin{lm}[\bq]\label{FS}%an unbounded Function in Sobolev spaces
For given $r$ and $s$ with $rs\le mn$,
there is an unbounded function $\si_0$ in $L^r_s(\bbr^{mn})$.
\end{lm}
\end{comment}

\begin{proof}
Let $\vp$ and $\phi$ be as in Proposition \ref{Sq}.
Define $\wh f_j(\xi_j)=\wh\vp(N(\xi_j-a))$ with $|a|=1$, and $\si(\xi,\dots,\xi_m)=\prod_{j=1}^m
\phi(N(\xi_j-a))$, then a direct calculation gives 
$\|f_j\|_{L^{p_j}(\rn)}\approx N^{-n+n/p_j}$ and $\|\si\|_{L^r_s(\bbr^{mn})}
\le CN^sN^{-mn/r}$. 
Moreover, 
$$
T_{\si}(f_1,\dots, f_m)(x)=N^{-mn}(\vp(x/N)e^{2\pi ix\cdot a})^m .
$$
We can therefore obtain that
$\|T_\si(f_1,\dots ,f_m)\|_{L^p(\rrn)}\approx N^{-mn+n/p} CN^sN^{-mn/r}$.
Then we come to the inequality $N^{-mn+n/p}\le CN^sN^{-mn/r}\prod_jN^{-n+n/p_j}$,
which forces $s-mn/r\ge0$ by letting $N$ go to infinity. 
\end{proof}

Next, we obtain from \eqref{B33} the  restrictions for the indices $p_j$ claimed in Theorem~\ref{PL}.

\begin{proof} (Theorem~\ref{PL})
By symmetry it suffices to consider the case $I=\{1,2,\ \dots,\ k\}$
with $k\in\{0,1,\dots ,m\}$ and the explanation $I=\emptyset$ when $k=0$.
Define  for $\xi\in\mathbb R$
$$
\wh {f_N}(\xi)=\sum_{j=-N}^N\wh\vp(N\xi-j)a_j(t), \quad\quad
\wh {g_N}(\xi)=\sum_{j=-N}^N\wh\vp(N\xi-j),
$$
and
\begin{align*}
&
\si_N(\xi_1,\dots,\xi_m) \\
&=\sum_{j_1=-N}^N\cdots\sum_{j_m=-N}^N
a_{j_1+\cdots+j_m}(t)c_{j_1+\cdots+j_m}a_{j_1}(t_1)\cdots a_{j_k}(t_k)
\phi(N\xi_1-j_1)\cdots \phi(N\xi_m-j_m).
\end{align*}
The idea is that in this setting 
if we take the first $k$ functions as $f_N$ and the remaining as $g_N$, we have
\begin{align*}
&T_{\si_N}(\overbrace {f_N,\dots,f_N}^{\textup{$k$ terms}},\overbrace {g_N,\dots,g_N}^{\textup{$m-k$ terms}})(x) \\ =
&\sum_{j_1=-N}^N\cdots\sum_{j_m=-N}^N
a_{j_1+\cdots+j_m}(t)c_{j_1+\cdots+j_m}
N^{-m}[\vp(  x/N)]^me^{2\pi ix(j_1+\cdots+j_m)/N}.
\end{align*}
This expression is independent of $k$ and by \eqref{mo}
we know 
$$
\|T_{\si_N}(f_N,\dots,f_N,g_N,\dots,g_N)\|_{L^p} \approx N^{1/p-1/2} .
$$
Previous calculations show also $\|f_N\|_{L^{p_i}}\approx C_{p_i}N^{1/p_i-1/2}$
and $\|\si_N\|_{L^r_s}\le CN^s$. Lemma 4.3 in \cite{P1} gives that
$\|g_N\|_{L^{p_i}}\le C_{p_i}$ for $p_i\in(1,\nf]$.
Consequently, we have 
$$
N^{\f 1 p-\f 1 2}\le CN^{\sum_{i=1}^k(\f1{p_i}-\f12)}N^s
$$
and this verifies our conclusion when $n=1$.

For the higher dimensional case, we just use the tensor products and $\si$
similar to what we have in Proposition \ref{Sq}, and thus conclude the proof.
\end{proof}

Notice that when $k=m$, Theorem~\ref{PL} coincides with Proposition \ref{Sq}.


\begin{thebibliography}{99}

%\bibitem{BL} J. Bergh and J. L\"ofstr\"om,
%Interpolation spaces,
%Grundlehren der Mathematischen Wissenschaften, No. 223,
%Springer-Verlag, Berlin-New York, 1976.

  \bibitem{CT} A. P. Calder\'on, A. Torchinsky,
  \emph{Parabolic maximal functions associated with a distribution,} II.
  Adv. in Math.  \textbf{24} (1977), no. 1, 101--171.
  
%\bibitem{C} A. Carbery, \emph{
%Radial Fourier multipliers and associated maximal functions,}
% Recent progress  in Fourier analysis, ed. by
%I. Peral and J.L. Rubio de Francia,
% North Holland, 1985.

%\bibitem{CoSch} W. C. Connett and A. L. Schwartz,
%\emph{A remark about Calder\'on's upper $s$ method of
%interpolation}. Interpolation and allied topics in analysis (Lund,
%1983), 48--53, Lecture Notes in Math., 1070, Springer, Berlin, 1984.

%\bibitem{DT} H. Dappa and W. Trebels, \emph{On maximal functions
%generated by Fourier multipliers,}  Ark. Mat. {\bf 23} (1985), 241--259.

%\bibitem{FS} C. Fefferman, E.M.  Stein, 
%\emph{ $H\sp{p}$ spaces of several variables},  Acta Math.  129  (1972),  137--193.


\bibitem{CM}
R. Coifman, Y. Meyer,
\emph{Commutateurs d'int\'egrales singuli\`eres et op\'erateurs multilin\'eaires},
Ann. Inst. Fourier (Grenoble) \textbf{28} (1978), no. 3, xi, 177--202. 


%\bibitem{Con}Connett, W. C.; Schwartz, A. L. A remark about Calder\'on's upper s method of interpolation. Interpolation spaces and allied topics in analysis (Lund, 1983), 48�53, Lecture Notes in Math., 1070, Springer, Berlin, 1984.


\bibitem{Dau} I. Daubechies, \emph{Orthonormal bases of compactly supported wavelets},  Comm. Pure Appl. Math. 
{\bf 41} (1988),   909--996.  

\bibitem{DauB}
I. Daubechies,
\emph{Ten lectures on wavelets}, CBMS-NSF Regional Conference Series in Applied Mathematics, 61. Society for Industrial and Applied Mathematics (SIAM), Philadelphia, PA, 1992. %xx+357 pp. ISBN: 0-89871-274-2

%\bibitem{CFA} L. Grafakos,   \emph{Classical Fourier Analysis}, 3rd edition, GTM 249, Springer-Verlag, NY 2014.

\bibitem{GHH}
{L.~Grafakos, D.~He,   P.~Honz{\'\i}k}, {\em Rough bilinear singular
  integrals}, submitted (2015).


\bibitem{P1} L. Grafakos, D. He, P. Honz\'\i k, H. V. Nguyen, 
\emph{The H\"ormander multiplier theorem, I: the linear case}, submitted (2016).

\bibitem{GHNY}L. Grafakos, D. He, H. V. Nguyen,   L. Yan, 
\emph{Multicommutators and multiplier theorems}, submitted (2015).
 
\bibitem{GK}
L. Grafakos, N. Kalton,  
\emph{The Marcinkiewicz multiplier condition for bilinear operators}, Studia Math. {\bf 146} (2001), no. 2, 115--156.

%\bibitem{GO} L. Grafakos,  S. Oh, \emph{The Kato-Ponce inequality,}	Comm.   PDE {\bf 39} (2014),  1128--1157.


\bibitem{GMT}
L. Grafakos, A. Miyachi,   N. Tomita,  
    \emph{On multilinear Fourier multipliers of limited smoothness},
     Can. J.   Math. {\bf 65} (2013), no. 2, 299--330.
 

\bibitem{GS}
L. Grafakos, Z. Si,  
\emph{The H\"ormander multiplier theorem for multilinear operators}, J. Reine Angew. Math. 
\textbf{668} (2012), 133--147.

\bibitem{GT}
L. Grafakos, R. Torres,
 \emph{Multilinear Calder\'on-Zygmund theory}, Adv. Math. \textbf{165} (2002), no. 1, 124--164.

\bibitem{He}
D. He,
\emph{On bilinear maximal Bochner-Riesz operators}, in preparation.


%\bibitem{Hir} I. I. Jr. Hirschman,    %%ST
%\emph{A convexity theorem for certain groups of transformations},
%J. Analyse Math. 
%\textbf{2} (1953), 209--218. 

%\bibitem{Hir}
%Hirschman, I. I., Jr.  On multiplier transformations. Duke Math. J. \textbf{26} (1959),
%221--242.

%\bibitem{Hoe}  L. H\"ormander,
%\emph{Estimates for translation invariant operators
%in $L^p$ spaces,} Acta Math. {\bf 104} (1960), 93--139.

%\bibitem{KP} T. Kato, G. Ponce, \emph{Commutator estimates and the Euler and Navier-Stokes equations}, Comm.Pure App. Math. {\bf 41} (1988),  891--907.

\bibitem{KS}
C. Kenig, E. M. Stein, 
\emph{Multilinear estimates and fractional integration}, Math. Res. Lett. \textbf{6} (1999), no. 1, 1--15.

%\bibitem{Mar} J. Marcinkiewicz, 
%\emph{Sur les multiplicateurs des s�ries de Fourier.} Studia Mathematica \textbf{8.1} (1939): 78--91.

%\bibitem{Mikhlin} S. G. Mikhlin,
%\emph{On the multipliers of Fourier integrals.} (Russian)
%Dokl. Akad. Nauk SSSR (N.S.) {\bf 109} (1956), 701--703.


\bibitem{Meyer1} Y. Meyer, \emph{ Wavelets and operators}, 
Cambridge Studies in Advanced Mathematics, {\bf 37}, Cambridge University Press, Cambridge, 1992.

%\bibitem{Mi}Miyachi, A.  On some Fourier multipliers for $H^p(R^n)$. J. Fac. Sci. Univ. Tokyo Sect. IA Math. 27 (1980), no. 1, 157--179.

\bibitem{MT}
A. Miyachi,  N. Tomita,
\emph{Minimal smoothness conditions for bilinear Fourier multipliers},
Rev. Mat. Iberoam. \textbf{29} (2013), no. 2, 495--530.

\bibitem{MPTT1}
C. Muscalu, J. Pipher, T. Tao,  C. Thiele,
\emph{Bi-parameter paraproducts}, Acta Math. \textbf{193} (2004), no. 2, 269--296. 

%\bibitem{MPTT2}
%C. Muscalu, J. Pipher, T. Tao, and C. Thiele, 
%\emph{Multi-parameter paraproducts}, Rev. Mat. Iberoam. \textbf{22} (2006), no. 3, 963--976.

%\bibitem{RdF}  J.-L. Rubio de Francia,
%\emph{Maximal functions and Fourier transforms,}
%Duke Math. J. {\bf 53} (1986), 395--404.
%\bibitem{See88}
%A. Seeger,
%\emph{A limit case of the H\"ormander multiplier theorem}, Monatsh. Math. \textbf{105}
% (1988), no. 2, 151--160.

\bibitem{Seeger} A. Seeger, 
\emph{Estimates near $L\sp 1$ for Fourier multipliers
and maximal functions},  Arch. Math. (Basel)  \textbf{53}  (1989),  no. 2, 188--193.



%\bibitem{ST} A. Seeger and T. Tao, \emph{Sharp Lorentz space
%estimates for rough operators}, Math. Ann {\bf 320} (2001), 381--415.

%\bibitem{Stein} E. M. Stein,  Singular Integrals and Differentiability
%Properties of Functions,  Princeton Univ. Press, Princeton, NJ, 1971.

%\bibitem{SW} E. M. Stein and N. J. Weiss, \emph{On the convergence of
%Poisson integrals,} Trans. Amer. Math. Soc.  {\bf 140} (1969), 34--54.

%\bibitem{W} T. H. Wolff, \emph{A note on interpolation spaces},
%Harmonic Analysis (Minneapolis 1981), 199-204, 
%Lecture Notes in Mathematics, 908,
%Springer, Berlin-New York, 1982.

%\bibitem{Zyg} A. Zygmund, {\it Trigonometric Series}, XXXXX.

\bibitem{T}
N. Tomita, 
\emph{A H\"ormander type multiplier theorem for multilinear operators}, J. Funct. Anal. 
\textbf{259} (2010), no. 8, 2028--2044.

\bibitem{Tr1} H.~Triebel, 
{\em Theory of function spaces. III, vol. 100 of Monographs
  in Mathematics}, 2006.
  
  
%\bibitem{Tr1} H. Triebel,  
%\emph{Bases in function spaces, sampling, discrepancy, numerical integration}, EMS Tracts in Mathematics, 
%{\bf 11}, European Mathematical Society (EMS), Z\"urich, 2010.




%\bibitem{Wai}S. Wainger, S.{\em Special trigonometric series in k-dimensions}, Mem. Amer. Math. Soc.59 (1965), 1--102.



\end{thebibliography}
\end{document}